\newcommand{\Fg}{\mathfrak{g}}
\newcommand{\Fh}{\mathfrak{h}}
\newcommand{\BC}{\mathbb{C}}
\newcommand{\BR}{\mathbb{R}}
\newcommand{\BQ}{\mathbb{Q}}
\newcommand{\BZ}{\mathbb{Z}}
\newcommand{\BB}{\mathbb{B}}
\newcommand{\Hom}{\mathop{\rm Hom}\nolimits}
\newcommand{\GL}{\mathop{\rm GL}\nolimits}
\newcommand{\wt}{\mathop{\rm wt}\nolimits}
\newcommand{\cl}{\mathop{\rm cl}\nolimits}
\newcommand{\dist}{\mathop{\rm dist}\nolimits}
\newcommand{\Deg}{\mathop{\rm Deg}\nolimits}
\newcommand{\ext}{\mathop{\rm ext}\nolimits}
\newcommand{\rr}{\Delta_{\mathrm{re}}}
\newcommand{\prr}{\Delta_{\mathrm{re}}^{+}}
\newcommand{\pair}[2]{\langle #1,\,#2 \rangle}
\newcommand{\Bpair}[2]{\bigl\langle #1,\,#2 \bigr\rangle}
\newcommand{\len}[2]{\ell(#2,\,#1)}
\newcommand{\mcr}[1]{\lfloor #1 \rfloor}
\newcommand{\vpi}{\varpi}
\newcommand{\ha}[1]{\widehat{#1}}
\newcommand{\ti}[1]{\widetilde{#1}}
\newcommand{\ud}[1]{\underline{#1}}
\newcommand{\bzero}{{\bf 0}}
\newcommand{\bd}{{\bf d}}
\newcommand{\bi}{{\bf i}}
\newcommand{\Q}{{\mathsf q}}
\renewcommand\section{\@startsection{section}{1}{0pt}
{-3.5ex plus -1ex minus -.2ex}{1.0ex plus .2ex}{\large\bf}}
\renewcommand\subsection{\@startsection{subsection}{1}{0pt}
{2.5ex plus 1ex minus .2ex}{-1em}{\bf}}
\theoremstyle{plain}
\newtheorem{thm}{Theorem}[subsection]
\newtheorem{lem}[thm]{Lemma}
\newtheorem{prop}[thm]{Proposition}
\newtheorem*{claim*}{Claim}
\theoremstyle{definition}
\newtheorem{dfn}[thm]{Definition}
\theoremstyle{remark}
\newtheorem{rem}[thm]{Remark}
\newtheorem{ex}[thm]{Example}
\newenvironment{enu}{%
 \begin{enumerate}%
 \renewcommand{\labelenumi}{\rm (\theenumi)}%
}{\end{enumerate}}
\begin{document}

\setlength{\baselineskip}{18pt}

\title{\Large\bf Explicit description of \\[2mm]
the degree function in terms of \\[2mm]
quantum Lakshmibai-Seshadri paths}
\author{
 Cristian Lenart \\ 
 \small Department of Mathematics and Statistics, 
 State University of New York at Albany, \\ 
 \small Albany, NY 12222, U.\,S.\,A. \ 
 (e-mail: {\tt clenart@albany.edu}) \\[5mm]
 Satoshi Naito \\ 
 \small Department of Mathematics, Tokyo Institute of Technology, \\
 \small 2-12-1 Oh-okayama, Meguro-ku, Tokyo 152-8551, Japan \ 
 (e-mail: {\tt naito@math.titech.ac.jp}) \\[5mm]
 Daisuke Sagaki \\ 
 \small Institute of Mathematics, University of Tsukuba, \\
 \small Tsukuba, Ibaraki 305-8571, Japan \ 
 (e-mail: {\tt sagaki@math.tsukuba.ac.jp}) \\[5mm]
 Anne Schilling \\ 
 \small Department of Mathematics, University of California, \\
 \small One Shields Avenue, Davis, CA 95616-8633, U.\,S.\,A. \ 
 (e-mail: {\tt anne@math.ucdavis.edu}) \\[5mm]
 Mark Shimozono \\ 
 \small Department of Mathematics, MC 0151, 460 McBryde Hall, 
        Virginia Tech, \\
 \small 225 Stanger St., Blacksburg, VA 24061, 
 U.\,S.\,A. \ 
 (e-mail: {\tt mshimo@vt.edu})
}
\date{}
\maketitle

%=======================%
%     START ABSTRACT    %
%=======================%
%
\begin{abstract} \setlength{\baselineskip}{16pt}
We give an explicit and computable description, in terms of 
the parabolic quantum Bruhat graph, of the degree function 
defined for quantum Lakshmibai-Seshadri paths, or equivalently, 
for ``projected'' (affine) level-zero Lakshmibai-Seshadri paths. 
This, in turn, gives an explicit and computable description of 
the global energy function on tensor products of 
Kirillov-Reshetikhin crystals of one-column type, and also of 
(classically restricted) one-dimensional sums.
\end{abstract}
%
%=========================%
%     START SECTION 01    %
%=========================%
%
\section{Introduction.}
\label{sec:intro}

Let $\Fg$ be an affine Lie algebra with index set $I$ for the simple roots, and 
let $U_{q}'(\Fg)$ be the quantum affine algebra (without the degree operator) 
associated to $\Fg$. Set $I_{0}:=I \setminus \{0\}$, 
where $0 \in I$ corresponds to the ``extended'' vertex in the Dynkin diagram of $\Fg$.
In \cite{NS-IMRN, NS-Tensor, NS-Adv}, 
Naito and Sagaki gave a combinatorial realization of the crystal bases of 
tensor products of level-zero fundamental representations 
$W(\varpi_{i})$, $i \in I_{0}$, over % the quantum affine algebra 
$U_{q}^{\prime}(\mathfrak{g})$, where the $\varpi_{i}$'s are 
the level-zero fundamental weights; 
the $U_{q}^{\prime}(\mathfrak{g})$-modules $W(\varpi_{i})$ are often called 
Kirillov-Reshetikhin (KR for short) modules of one-column type, 
and accordingly their crystal bases are called KR crystals of one-column type.
In the papers above, they realized elements of the crystal bases 
as projected (affine) level-zero Lakshmibai-Seshadri (LS for short) paths.
Here a projected level-zero LS path is obtained from an ordinary LS path 
of shape $\lambda$ by factoring out the null root $\delta$ of 
the affine Lie algebra $\mathfrak{g}$, where $\lambda$ is 
a level-zero dominant integral weight of the form 
$\lambda = \sum_{i \in I_{0}} m_{i} \varpi_{i}$, 
with $m_{i} \in \mathbb{Z}_{\geq 0}$.
However, from the nature of the definition above of 
projected level-zero LS paths, their description of these objects in 
\cite{NS-IMRN, NS-Tensor, NS-Adv} is not as explicit as the one of 
usual LS paths given by Littelmann in \cite{Lit-I}. 

By contrast, in our previous paper \cite{LNSSS3}, 
we proved that (in the case that $\Fg$ is an untwisted affine Lie algebra)
a projected level-zero LS path is identical 
to what we call a quantum LS path, which is described quite explicitly 
in terms of the parabolic quantum Bruhat graph, instead of 
(the Hasse diagram of) the usual Bruhat graph.

Also, in \cite{NS-Degree}, 
we defined a certain integer-valued function, 
called the degree function, on the set $\mathbb{B}(\lambda)_{\cl}$ 
of projected level-zero LS paths of shape 
$\lambda = \sum_{i \in I_{0}} m_{i} \varpi_{i}$, and 
proved that it is identical to the global ``energy function'' 
on the tensor product $\bigotimes_{i \in I_{0}} 
\mathbb{B}(\varpi_{i})_{\cl}^{\otimes m_{i}}$ 
under the isomorphism $\mathbb{B}(\lambda)_{\cl} \cong 
\bigotimes_{i \in I_{0}} \mathbb{B}(\varpi_{i})_{\cl}^{\otimes m_{i}}$ 
of $U_{q}^{\prime}(\mathfrak{g})$-crystals; recall that 
for each $i \in I_{0}$, the crystal 
$\mathbb{B}(\varpi_{i})_{\cl}$ is isomorphic, 
as a $U_{q}^{\prime}(\mathfrak{g})$-crystal, 
to a KR crystal of one-column type.
However, again from the nature of the definition of 
projected level-zero LS paths, our description in \cite{NS-Degree} 
is not very explicit, and hence it is difficult to compute 
the value of the degree function at 
a given projected level-zero LS path.

In \cite{LNSSS2}, we give an explicit and computable description, 
in terms of the parabolic quantum Bruhat graph, 
of the degree function defined for quantum LS paths, 
or equivalently, for projected level-zero LS paths \cite{LNSSS3}. 
This, in turn, gives a new description of 
the global energy function on tensor products of KR crystals of 
one-column type, and also of (classically restricted) one-dimensional 
sums arising from the study of solvable lattice models 
in statistical mechanics through Baxter's corner transfer matrix method 
(for details, see \cite{S}).

The purpose of this paper is to give a new proof of 
the description above, in terms of the parabolic quantum Bruhat graph, 
of the degree function. We should mention that our proof in this paper 
is completely different from the one in \cite{LNSSS2} in that 
(at least in appearance) we do not make use of root operators; 
it is based on a technical lemma (Lemma~\ref{lem:C}) about the decomposition of 
$\BB(\lambda)$ into connected components, and also on our results 
in \cite{LNSSS1}, where $\BB(\lambda)$ denotes the crystal of 
(not projected) LS paths of shape $\lambda$. 

This paper is organized as follows. In \S\ref{sec:LS}, 
we fix our basic notation, and review some fundamental facts about
level-zero path crystals. Also, we recall the definition of 
the degree function, and then prove a technical lemma (Lemma~\ref{lem:C}), 
which plays an important rule in the proof of our main result (Theorem~\ref{thm:main}). 
In \S\ref{sec:QLS}, we recall the notion of 
parabolic quantum Bruhat graph, and then give 
the definition of quantum LS paths. 
In \S\ref{sec:main}, we state and prove our main result about 
the description of the degree function in terms of 
the parabolic quantum Bruhat graph. 

\paragraph{Acknowledgments.}
C.L. was partially supported by the NSF grant DMS--1101264. 
S.N. was supported by Grant-in-Aid for Scientific Research (C), No.\,24540010, Japan. 
D.S. was supported by Grant-in-Aid for Young Scientists (B), No.\,23740003, Japan.
A.S. was partially supported by the NSF grant OCI--1147247.
M.S. was partially supported by the NSF grant DMS--1200804.

%=========================%
%     START SECTION 02    %
%=========================%
%
\section{Lakshmibai-Seshadri paths and the degree function.}
\label{sec:LS}

%==============================%
%     START SUBSECTION 0201    %
%==============================%
%
\subsection{Basic notation.}
\label{subsec:notation}

Let $\Fg$ be an untwisted affine Lie algebra 
over $\BC$ with Cartan matrix $A=(a_{ij})_{i,\,j \in I}$; 
throughout this paper, the elements of the index set $I$ 
are numbered as in \cite[\S4.8, Table Aff~1]{Kac}. 
Take a distinguished vertex $0 \in I$ as in \cite{Kac}, 
and set $I_{0}:=I \setminus \{0\}$.
Let 
$\Fh=\bigl(\bigoplus_{j \in I} \BC \alpha_{j}^{\vee}\bigr) \oplus \BC d$
denote the Cartan subalgebra of $\Fg$, where 
$\Pi^{\vee}:=\bigl\{\alpha_{j}^{\vee}\bigr\}_{j \in I} \subset \Fh$ is 
the set of simple coroots, and 
$d \in \Fh$ is the scaling element (or degree operator). 
We denote by $\pair{\cdot\,}{\cdot}:
\Fh^{\ast} \times \Fh \rightarrow \BC$ 
the duality pairing between $\Fh^{\ast}:=\Hom_{\BC}(\Fh,\,\BC)$ and $\Fh$. 
Denote by $\Pi:=\bigl\{\alpha_{j}\bigr\}_{j \in I} \subset 
\Fh^{\ast}:=\Hom_{\BC}(\Fh,\BC)$ 
the set of simple roots, and by 
$\Lambda_{j} \in \Fh^{\ast}$, $j \in I$, 
the fundamental weights; 
note that $\pair{\alpha_{j}}{d}=\delta_{j,0}$ and 
$\pair{\Lambda_{j}}{d}=0$ for $j \in I$.
Let $\delta=\sum_{j \in I} a_{j}\alpha_{j} \in \Fh^{\ast}$ and 
$c=\sum_{j \in I} a^{\vee}_{j} \alpha_{j}^{\vee} \in \Fh$ denote 
the null root and the canonical central element of 
$\Fg$, respectively. 
The Weyl group $W$ of $\Fg$ is defined as
$W:=\langle r_{j} \mid j \in I\rangle \subset \GL(\Fh^{\ast})$, 
where $r_{j} \in \GL(\Fh^{\ast})$ denotes the simple reflection 
associated to $\alpha_{j}$ for $j \in I$, 
with $\ell:W \rightarrow \BZ_{\ge 0}$ the length function on $W$. 
Denote by $\rr$ the set of real roots, i.e., $\rr:=W\Pi$, 
and by $\prr \subset \rr$ the set of positive real roots; 
for $\beta \in \rr$, we denote by $\beta^{\vee}$ 
the dual root of $\beta$, and by $r_{\beta} \in W$ 
the reflection associated to $\beta$. 
We take a dual weight lattice $P^{\vee}$ 
and a weight lattice $P$ as follows:
%
%%%%%%%%%%%%%%%%%%%
%%% eq:lattices %%%
%%%%%%%%%%%%%%%%%%%
%
\begin{equation} \label{eq:lattices}
P^{\vee}=
\left(\bigoplus_{j \in I} \BZ \alpha_{j}^{\vee}\right) \oplus \BZ d \, 
\subset \Fh
\quad \text{and} \quad 
P= 
\left(\bigoplus_{j \in I} \BZ \Lambda_{j}\right) \oplus 
   \BZ \delta \subset \Fh^{\ast}.
\end{equation}
It is clear that $P$ contains the root lattice
$Q:=\bigoplus_{j \in I} \BZ \alpha_{j}$, and that 
$P \cong \Hom_{\BZ}(P^{\vee},\BZ)$. 

Let $W_{0}$ denote the subgroup of $W$ generated by 
$r_{j}$, $j \in I_{0}$. 
Set $Q_{0}:=\bigoplus_{j \in I_{0}}\BZ\alpha_{j}$, 
$Q_{0}^{+}:=\sum_{j \in I_{0}}\BZ_{\ge 0}\alpha_{j}$, 
$\Delta_{0}:=\rr \cap Q_{0}$, $\Delta_{0}^{+}:=
\rr \cap Q_{0}^{+}$, and
$\Delta_{0}^{-}:=-\Delta_{0}^{+}$. 
Note that $W_{0}$ (resp., $\Delta_{0}$, $\Delta_{0}^{+}$, $\Delta_{0}^{-}$) 
can be thought of as the (finite) Weyl group 
(resp., the set of roots, the set of positive roots, the set of negative roots) 
of the finite-dimensional simple Lie subalgebra of $\Fg$ 
corresponding to the subset $I_{0}$ of $I$. 
Also, we denote by $\theta \in \Delta_{0}^{+}$ the highest root of 
the (finite) root system $\Delta_{0}$; note that 
$\alpha_{0}=-\theta+\delta$ and $\alpha_{0}^{\vee}=-\theta^{\vee}+c$. 
%
% Set $Q_{0}:=\bigoplus_{j \in I_{0}} \BZ \alpha_{j}$. 
%
%%%%%%%%%%%%%%%
%%% dfn:lv0 %%%
%%%%%%%%%%%%%%%
%
\begin{dfn} \label{dfn:lv0}
\mbox{} \samepage
\begin{enu}
\item
An integral weight 
$\lambda \in P$ is said to 
be of level zero if $\pair{\lambda}{c}=0$. 

\item
An integral weight 
$\lambda \in P$ is said to 
be level-zero dominant if $\pair{\lambda}{c}=0$, and 
$\pair{\lambda}{\alpha_{j}^{\vee}} \ge 0$ 
for all $j \in I_{0}=I \setminus \{0\}$. 
\end{enu}
\end{dfn}
%
%%%%%%%%%%%%%%%%%
%%% rem:theta %%%
%%%%%%%%%%%%%%%%%
%
\begin{rem} \label{rem:theta}
\mbox{}
\begin{enu}
\item If $\lambda \in P$ is of level zero, 
then $\pair{\lambda}{\alpha_{0}^{\vee}}=-\pair{\lambda}{\theta^{\vee}}$.

\item For $h \in Q^{\vee}_{0}:=\bigoplus_{j \in I_{0}} \BZ \alpha_{j}^{\vee}$, 
we denote by $t_{h} \in W$ the translation with respect to $h$ (see \cite[\S6.5]{Kac}). 
If $\lambda$ is of level-zero, then 
$t_{h}\lambda=\lambda-\pair{\lambda}{h}\delta$ for $h \in Q_{0}^{\vee}$. 
Because $W$ is the semidirect product of $W_{0}$ and the abelian (normal) subgroup 
$T=\bigl\{t_{h} \mid h \in Q^{\vee}_{0}\bigr\}$ of translations by \cite[Proposition~6.5]{Kac}, 
we deduce (see also \cite[Lemma~2.6]{NS-LMS} for example) that 
if $\lambda$ is level-zero dominant, then $W\lambda=W_{0}T\lambda \subset 
W_{0}\lambda+\BZ\delta \subset \lambda-Q_{0}^{+}+\BZ\delta$; 
we define $d_{\lambda} \in \BZ_{> 0}$ by: 
$\bigl\{n \in \BZ \mid \lambda+n\delta \in T\lambda \bigr\}=d_{\lambda}\BZ$.
\end{enu}
\end{rem}

For each $i \in I_{0}$, 
we define a level-zero fundamental weight 
$\vpi_{i} \in P$ by
\begin{equation}
\vpi_{i}:=\Lambda_{i}-a_{i}^{\vee}\Lambda_{0}.
\end{equation}
The weights $\vpi_{i}$ for $i \in I_{0}$ 
are actually level-zero dominant integral weights; 
indeed, $\pair{\vpi_{i}}{c}=0$ and 
$\pair{\vpi_{i}}{\alpha_{j}^{\vee}}=\delta_{i,j}$ for $i,\,j \in I_{0}$.

Let $\cl:\Fh^{\ast} \twoheadrightarrow \Fh^{\ast}/\BC\delta$ 
denote the canonical projection from $\Fh^{\ast}$ onto 
$\Fh^{\ast}/\BC\delta$, and 
define $P_{\cl}$ and $P_{\cl}^{\vee}$ by
%
%%%%%%%%%%%%%%%%%
%%% eq:lat-cl %%%
%%%%%%%%%%%%%%%%%
%
\begin{equation} \label{eq:lat-cl}
P_{\cl} := \cl(P) = 
 \bigoplus_{j \in I} \BZ \cl(\Lambda_{j})
\quad \text{and} \quad 
P_{\cl}^{\vee} := 
 \bigoplus_{j \in I} \BZ \alpha_{j}^{\vee} 
 \subset P^{\vee}.
\end{equation}
We see that 
$P_{\cl} \cong P/\BZ\delta$, and that 
$P_{\cl}$ can be identified with 
$\Hom_{\BZ}(P_{\cl}^{\vee},\BZ)$ as a $\BZ$-module by
%
%%%%%%%%%%%%%%%
%%% eq:pair %%%
%%%%%%%%%%%%%%%
%
\begin{equation} \label{eq:pair}
\pair{\cl(\lambda)}{h}=\pair{\lambda}{h} \quad
\text{for $\lambda \in P$ and $h \in P_{\cl}^{\vee}$}.
\end{equation}
Also, there exists 
a natural action of the Weyl group $W$ on 
$\Fh^{\ast}/\BC\delta$ induced by the one on $\Fh^{\ast}$, 
since $W\delta=\delta$; it is obvious that
$w \circ \cl = \cl \circ w$ for all $w \in W$.
%
%%%%%%%%%%%%%%%%%
%%% rem:orbcl %%%
%%%%%%%%%%%%%%%%%
%
\begin{rem} \label{rem:orbcl}
Let $\lambda \in P$ be a level-zero integral weight. 
It is easy to check that 
$\cl(W\lambda)=W_{0}\cl(\lambda)$ 
(see the proof of \cite[Lemma~2.3.3]{NS-LMS}). 
In particular, we have 
$\cl(r_{0}\lambda)=r_{\theta}\lambda$ 
since $\alpha_{0}=-\theta+\delta$ and 
$\alpha_{0}^{\vee}=-\theta^{\vee}+c$. 
\end{rem}

For simplicity of notation, 
we often write $\beta$ instead of $\cl(\beta) \in P_{\cl}$ 
for $\beta \in \bigoplus_{j \in I} \BZ \alpha_{j}$; 
note that $\alpha_{0}=-\theta$ in $P_{\cl}$ 
since $\alpha_{0}=-\theta+\delta$ in $P$. 

%==============================%
%     START SUBSECTION 0202    %
%==============================%
%
\subsection{Lakshmibai-Seshadri paths.}
\label{subsec:LS}

Here we recall the definition of 
Lakshmibai-Seshadri (LS for short) paths
from \cite[\S4]{Lit-A}. In this subsection, 
we fix a level-zero dominant integral 
weight $\lambda \in \sum_{i \in I_{0}}\BZ_{\ge 0}\vpi_{i}$. 
%
%%%%%%%%%%%%%%%%%%
%%% dfn:Bruhat %%%
%%%%%%%%%%%%%%%%%%
%
\begin{dfn} \label{dfn:Bruhat}
For $\mu,\,\nu \in W\lambda$, 
let us write $\mu \ge \nu$ if there exists a sequence 
$\mu=\mu_{0},\,\mu_{1},\,\dots,\,\mu_{n}=\nu$ 
of elements in $W\lambda$ and a sequence 
$\xi_{1},\,\dots,\,\xi_{n} \in \prr$ of 
positive real roots such that
$\mu_{k}=r_{\xi_{k}}(\mu_{k-1})$ and 
$\pair{\mu_{k-1}}{\xi^{\vee}_{k}} < 0$ 
for $k=1,\,2,\,\dots,\,n$. 
If $\mu \ge \nu$, then we define $\dist(\mu,\nu)$ to 
be the maximal length $n$ of all possible such sequences 
$\mu_{0},\,\mu_{1},\,\dots,\,\mu_{n}$ for $(\mu,\nu)$.
\end{dfn}
%
%%%%%%%%%%%%%%
%%% rem:LP %%%
%%%%%%%%%%%%%%
%
\begin{rem} \label{rem:LP}
Keep the notation of Definition~\ref{dfn:Bruhat}. 
We see that 
\begin{equation*}
\nu-\mu = \sum_{k=1}^{n} (\mu_{k}-\mu_{k-1}) = 
-\sum_{k=1}^{n}\underbrace{\pair{\mu_{k-1}}{\xi_{k}^{\vee}}}_{< 0}\xi_{k} 
\in \sum_{j \in I}\BZ_{\ge 0} \alpha_{j}.
\end{equation*}
\end{rem}

It is obvious that $\mu$ covers $\nu$ in the poset $W\lambda$ 
if and only if $\mu > \nu$ with $\dist(\mu,\,\nu)=1$. 
In this case, we write $\mu \gtrdot \nu$. 
%
%%%%%%%%%%%%%%%%%
%%% rem:cover %%%
%%%%%%%%%%%%%%%%%
%
\begin{rem} \label{rem:cover}
Let $\mu,\,\nu \in W\lambda$ be such that $\mu \gtrdot \nu$, 
and let $\xi \in \prr$ be the positive real root such that 
$r_{\xi}\mu=\nu$. We know from \cite[Lemma 2.11]{NS-LMS} that 
$\xi \in \Delta_{0}^{+} \sqcup \bigl\{-\gamma+\delta \mid 
\gamma \in \Delta_{0}^{+}\bigr\}$. 
\end{rem}
%
%%%%%%%%%%%%%%%%%%
%%% dfn:achain %%%
%%%%%%%%%%%%%%%%%%
%
\begin{dfn} \label{dfn:achain}
For $\mu,\,\nu \in W\lambda$ with $\mu > \nu$ and 
a rational number $0 < \sigma < 1$, 
a $\sigma$-chain for $(\mu,\nu)$ is, 
by definition, a sequence $\mu=\mu_{0} \gtrdot \mu_{1} \gtrdot 
\dots \gtrdot \mu_{n}=\nu$ of elements in $W\lambda$ 
such that $\sigma\pair{\mu_{k-1}}{\xi_{k}^{\vee}} \in \BZ_{< 0}$ 
for all $k=1,\,2,\,\dots,\,n$, 
where $\xi_{k}$ is the positive real root such that 
$r_{\xi_{k}}\mu_{k-1}=\mu_{k}$. 
%
%(remark that $\mu_{k-1} > \mu_{k}$ and $\dist(\mu_{k-1},\mu_{k})=1$).
%
\end{dfn}
%
%%%%%%%%%%%%%%
%%% dfn:LS %%%
%%%%%%%%%%%%%%
%
\begin{dfn} \label{dfn:LS}
An LS path of shape $\lambda$ is, by definition, 
a pair $(\ud{\nu}\,;\,\ud{\sigma})$ of a sequence 
$\ud{\nu}:\nu_{1} > \nu_{2} > \cdots > \nu_{s}$ of 
elements in $W\lambda$ and a sequence 
$\ud{\sigma}:0=\sigma_{0} < \sigma_{1} < \cdots < \sigma_{s}=1$ of 
rational numbers satisfying the condition that 
there exists a $\sigma_{k}$-chain for $(\nu_{k},\,\nu_{k+1})$ 
for each $k=1,\,2,\,\dots,\,s-1$. We denote by $\BB(\lambda)$ 
the set of all LS paths of shape $\lambda$.
\end{dfn}

We identify $\pi=(\nu_{1},\,\nu_{2},\,\dots,\,\nu_{s}\,;\,
\sigma_{0},\,\sigma_{1},\,\dots,\,\sigma_{s}) \in \BB(\lambda)$ with 
the following piecewise-linear, continuous map 
$\pi:[0,1] \rightarrow \BR \otimes_{\BZ} P$: 
%
%%%%%%%%%%%%%%%%%%%
%%%%% eq:path %%%%%
%%%%%%%%%%%%%%%%%%%
%
\begin{equation} \label{eq:path}
\pi(t)=\sum_{l=1}^{k-1}
(\sigma_{l}-\sigma_{l-1})\nu_{l}+
(t-\sigma_{k-1})\nu_{k} \quad 
\text{for $\sigma_{k-1} \le t \le \sigma_{k}$, $1 \le k \le s$}. 
\end{equation}
%
%%%%%%%%%%%%%%%%%
%%% rem:str01 %%%
%%%%%%%%%%%%%%%%%
%
\begin{rem} \label{rem:str01}
It is obvious from the definition that for every $\nu \in W\lambda$, 
$\pi_{\nu}:=(\nu\,;\,0,1)$ is an LS path of shape $\lambda$, 
which corresponds (under \eqref{eq:path}) to the straight line path
$\pi_{\nu}(t)=t\nu$, $t \in [0,1]$, connecting $0$ to $\nu$. 
\end{rem}

For $\pi \in \BB(\lambda)$, we define $\cl(\pi):[0,1] \rightarrow 
\BR \otimes_{\BZ} P_{\cl}$ by
\begin{equation*}
\cl(\pi)(t):=\cl(\pi(t)) \quad \text{for $t \in [0,1]$}.
\end{equation*}
Also, we set
\begin{equation*} 
\BB(\lambda)_{\cl}:=\cl(\BB(\lambda))=
\bigl\{\cl(\pi) \mid \pi \in \BB(\lambda)\bigr\}. 
\end{equation*}

%%%%%%%%%%%%%%%%%
%%% rem:str02 %%%
%%%%%%%%%%%%%%%%%
%
\begin{rem} \label{rem:str02}
For $\mu \in P_{\cl}$, we define $\eta_{\mu}(t):=t\mu$ 
for $t \in [0,1]$. It is easily seen 
from Remark~\ref{rem:str01} that 
$\eta_{\mu}$ is contained in $\BB(\lambda)_{\cl}$ 
for all $\mu \in \cl(W\lambda)=W_{0}\cl(\lambda)$. 
\end{rem}

We can endow the set $\BB(\lambda)$ of LS paths of shape $\lambda$
(resp., the set $\BB(\lambda)_{\cl}$ of ``$\cl$-projected'' LS paths of shape $\lambda$) 
with a crystal structure with weights in $P$ (resp., in $P_{\cl}$) by defining 
root operators on $\BB(\lambda)$ (resp., $\BB(\lambda)_{\cl}$); since we do not 
use root operators in this paper, we omit the details (see \cite{Lit-A}, 
and also \cite[\S2.2]{NS-Degree}, \cite[\S2.3]{LNSSS2}).

%==============================%
%     START SUBSECTION 0203    %
%==============================%
%
\subsection{Degree function.}
\label{subsec:degree}

As in the previous subsection, we fix a level-zero dominant integral weight 
$\lambda \in \sum_{i \in I_{0}}\BZ_{\ge 0}\vpi_{i}$. 
Let us recall the definition of the degree function 
$\Deg=\Deg_{\lambda}:\BB(\lambda)_{\cl} \rightarrow \BZ_{\le 0}$
from \cite[\S3.1]{NS-Degree}. We know the following 
proposition from \cite[Proposition~3.1.3]{NS-Degree}.
%
%%%%%%%%%%%%%%%%%%%
%%% prop:degree %%%
%%%%%%%%%%%%%%%%%%%
%
\begin{prop} \label{prop:degree}
Let $\lambda \in \sum_{i \in I_{0}}\BZ_{\ge 0}\vpi_{i}$ be 
a level-zero dominant integral weight. For each 
$\eta \in \BB(\lambda)_{\cl}$, there exists a unique 
element $\pi_{\eta} \in \BB(\lambda)$ satisfying 
the following conditions: 
\begin{enu}
\item $\cl(\pi_{\eta})=\eta$; 

\item the element $\pi_{\eta}$ is contained in the connected component 
$\BB_{0}(\lambda)$ of $\BB(\lambda)$ containing the straight line path
$\pi_{\lambda}=(\lambda\,;\,0,\,1) \in \BB(\lambda)$; 

\item if we write $\pi_{\eta}$ in the form 
$(\nu_{1},\,\nu_{2},\,\dots,\,\nu_{s}\,;\,\ud{\sigma})$ as in Definition~\ref{dfn:LS}, 
then $\nu_{1}$ is contained in the set $\lambda-Q_{0}^{+}$ 
{\rm(}see Remark~\ref{rem:theta}\,(2){\rm)}. 
\end{enu}
\end{prop}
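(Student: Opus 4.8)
The plan is to establish existence and uniqueness separately, both reductions to the structure of the crystal $\BB(\lambda)$ and its connected component $\BB_0(\lambda)$. First I would recall that $\cl:\BB(\lambda)\to\BB(\lambda)_{\cl}$ is surjective by definition of $\BB(\lambda)_{\cl}$, so there is at least one $\pi\in\BB(\lambda)$ with $\cl(\pi)=\eta$; the real content is to move such a $\pi$ into $\BB_0(\lambda)$ and to normalize its initial direction $\nu_1$. The key fact I would invoke is that the fibers of $\cl$ are precisely the orbits under the translation subgroup $T=\{t_h\mid h\in Q_0^\vee\}$ acting on paths: since $t_h\mu=\mu-\pair{\mu}{h}\delta$ for level-zero $\mu$, two LS paths of shape $\lambda$ have the same $\cl$-image if and only if one is obtained from the other by shifting each direction $\nu_k$ by an element of $\BZ\delta$ in a compatible way (equivalently, by the action of the ``$\delta$-shift'' operators that commute with root operators). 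This $\delta$-shift action commutes with the root operators $e_i,f_i$, hence permutes the connected components of $\BB(\lambda)$; moreover it acts transitively on the set of components lying over a fixed component of $\BB(\lambda)_{\cl}$. Since $\eta$ lies in the component of $\BB(\lambda)_{\cl}$ that is the $\cl$-image of $\BB_0(\lambda)$ (because $\BB(\lambda)_{\cl}$ is connected, or because one checks $\eta_{\cl(\lambda)}=\cl(\pi_\lambda)$ is a source), we can choose the $\delta$-shift so that the resulting preimage lies in $\BB_0(\lambda)$; this gives a preimage satisfying (1) and (2).

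Next I would address condition (3). Given any $\pi'=(\nu_1',\dots,\nu_s';\ud\sigma)\in\BB_0(\lambda)$ with $\cl(\pi')=\eta$, I want to adjust it by a $\delta$-shift that \emph{preserves membership in} $\BB_0(\lambda)$ so that the first direction lands in $\lambda-Q_0^+$. Here I would use Remark~\ref{rem:theta}(2): every element of $W\lambda$ lies in $\lambda-Q_0^++\BZ\delta$, so $\nu_1'=\mu-n\delta$ for a unique $\mu\in(\lambda-Q_0^+)\cap(W_0\lambda+\BZ\delta)$ and some $n\in\BZ$; then I need a $\delta$-shift operator taking $\pi'$ to a path whose first direction is exactly this $\mu$. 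The relevant observation is that the subgroup of $\delta$-shifts stabilizing $\BB_0(\lambda)$ is generated by the ``global'' shift $\pi\mapsto\pi+\delta$ (shifting all directions by $\delta$, which just translates the whole path and clearly preserves the component) — no, more precisely I would identify, via the theory in \cite{NS-LMS}, the stabilizer of $\BB_0(\lambda)$ under the full $\delta$-shift group and show its orbit on the possible first directions of preimages of $\eta$ is a single $d_\lambda\BZ$-coset, on which there is a unique representative with first direction in $\lambda-Q_0^+$. This yields existence.

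For uniqueness, suppose $\pi_\eta$ and $\pi_\eta'$ both satisfy (1)--(3). By (1) they differ by a $\delta$-shift, by (2) that shift stabilizes $\BB_0(\lambda)$, and by (3) both have first direction in $\lambda-Q_0^+$; since distinct $\BB_0(\lambda)$-stabilizing shifts change the first direction by a nonzero element of $\BZ\delta$ (hence move it out of $\lambda-Q_0^+$, as $\lambda-Q_0^+$ meets each $\BZ\delta$-coset in at most one point — indeed in at most one point because $Q_0^+\cap\BZ\delta=\{0\}$), the shift must be trivial, so $\pi_\eta=\pi_\eta'$. The main obstacle I anticipate is the precise bookkeeping in the existence step: making rigorous the claim that the $\delta$-shift group acts transitively on the components of $\BB(\lambda)$ over a given component of $\BB(\lambda)_{\cl}$, and identifying the stabilizer of $\BB_0(\lambda)$ explicitly enough to control the first direction. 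This is exactly the kind of statement for which one invokes \cite{NS-LMS} (e.g.\ the description of $\cl$-fibers and of $\BB(\lambda)_{\cl}$ as a connected crystal) together with the integer $d_\lambda$ of Remark~\ref{rem:theta}(2); once that structural input is in place, conditions (1)--(3) pin down the representative by an elementary argument as above.
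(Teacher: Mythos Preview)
The paper does not prove this proposition at all: it is simply quoted from \cite[Proposition~3.1.3]{NS-Degree} (see the sentence immediately preceding the statement). So there is no ``paper's own proof'' to compare with; the content you are trying to reconstruct lives in \cite{NS-Degree} (and, for the structural facts about $\BB(\lambda)$ you invoke, in \cite{NS-LMS}).

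That said, your sketch is broadly on the right track but has a genuine imprecision in the existence step. The map $\pi\mapsto(\pi(t)+t\delta)$ (shifting every direction by $\delta$) is \emph{not} in general an automorphism of $\BB(\lambda)$: for $\nu\in W\lambda$ one has $\nu+\delta\in W\lambda$ only when $d_\lambda=1$, so your ``global shift'' need not even produce an LS path of shape $\lambda$, let alone stabilize $\BB_0(\lambda)$. The correct picture, which you gesture at after your self-correction, is the one used in the proof of Lemma~\ref{lem:C} in this paper: within a fixed connected component $C$ the $\cl$-fiber over $\eta$ is exactly $\{\pi_\eta(t)+t(M\delta)+F(t)\delta\mid M\in d_\lambda\BZ\}$ for a component-dependent piecewise-linear $F$ (this is \cite[Lemma~3.1.2]{NS-Degree}, relying on \cite[Lemma~2.26]{NS-LMS}). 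For $C=\BB_0(\lambda)$ one has $F=0$, and then your uniqueness argument goes through verbatim: distinct $M$ change the first direction by a nonzero multiple of $\delta$, which cannot lie in $Q_0^{+}$. For existence you still need that $\cl\!\restriction_{\BB_0(\lambda)}$ is surjective onto $\BB(\lambda)_{\cl}$; your parenthetical ``because $\BB(\lambda)_{\cl}$ is connected'' is the right reason, but it is a nontrivial input (it is the content of \cite{NS-Tensor,NS-Adv}), not something one ``checks''. If you want a self-contained argument, cite those results explicitly rather than treating connectedness as obvious.
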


Let $\eta \in \BB(\lambda)_{\cl}$. 
It follows from \cite[Lemma~3.1.1]{NS-Degree} that 
$\pi_{\eta}(1) \in P$ is of the form 
$\pi_{\eta}(1)=\lambda-\beta+K\delta$ 
for some $\beta \in Q_{0}^{+}$ and $K \in \BZ_{\ge 0}$. 
We define 
\begin{equation*}
\Deg(\eta)=\Deg_{\lambda}(\eta):=-K \in \BZ_{\le 0}.
\end{equation*}

The following lemma plays an important role 
in the proof of Theorem~\ref{thm:main}.
%
%%%%%%%%%%%%%
%%% lem:C %%%
%%%%%%%%%%%%%
%
\begin{lem} \label{lem:C}
Let $C$ be a connected component of $\BB(\lambda)$. 
% other than $\BB_{0}(\lambda)$. 
%
\begin{enu}

\item For each $\eta \in \BB(\lambda)_{\cl}$, 
there exists a unique element $\pi^{C}_{\eta} \in C$ 
satisfying the same conditions as (1) and (3) of
Proposition~\ref{prop:degree}. 

\item If $\pi_{\eta}(1)=\lambda-\beta-\Deg(\eta)\delta$
with $\beta \in Q_{0}^{+}$, then 
$\pi^{C}_{\eta}(1)=\lambda-\beta+\bigl(-\Deg(\eta)+L\bigr)\delta$ 
for some $L \in \BZ_{\ge 0}$. 

\item In part (2) above, 
$C=\BB_{0}(\lambda)$ if and only if $L=0$. 
\end{enu}
\end{lem}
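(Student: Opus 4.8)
The plan is to analyze how a connected component $C$ of $\BB(\lambda)$ is determined by its elements, using the fact that the root operators preserve the ``$\delta$-shift'' only up to the action of the affine Weyl group. The key structural input is that $\BB(\lambda)$, as a crystal, decomposes into connected components each of which is isomorphic to $\BB_{0}(\lambda)$ up to a global shift by an integer multiple of $\delta$; this follows from the theory of level-zero path crystals (and the results of \cite{LNSSS1} cited in the introduction). Concretely, for a fixed $\eta \in \BB(\lambda)_{\cl}$, the fiber $\cl^{-1}(\eta) \cap \BB(\lambda)$ consists of paths that differ from one another by translation by $\BZ\delta$ (since $\cl$ factors out exactly $\BC\delta$), and each such path lies in a different connected component. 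So $C \mapsto \pi^{C}_{\eta}$ sets up a bijection between the connected components of $\BB(\lambda)$ and the elements of $\cl^{-1}(\eta)$.

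\textbf{Part (1).} First I would establish existence and uniqueness of $\pi^{C}_{\eta} \in C$ satisfying conditions (1) and (3) of Proposition~\ref{prop:degree}. Uniqueness is the easy direction: any two elements of $C$ with the same $\cl$-image differ by a nonzero multiple of $\delta$ (using that $\cl$ restricted to a single connected component is injective, which is part of the crystal-theoretic input), so condition (3) — that the initial direction $\nu_{1}$ lie in $\lambda - Q_{0}^{+}$ rather than $\lambda - Q_{0}^{+} + n\delta$ for $n \neq 0$ — pins down at most one candidate. For existence, I would take the element $\pi_{\eta} \in \BB_{0}(\lambda)$ from Proposition~\ref{prop:degree}, transport it to $C$ via the crystal isomorphism $\BB_{0}(\lambda) \xrightarrow{\sim} C$ (which commutes with $\cl$ and shifts weights by a fixed multiple of $\delta$), and then, if necessary, correct by a translation $t_{h}$ with $h \in Q_{0}^{\vee}$ to move the initial direction back into $\lambda - Q_{0}^{+}$; here Remark~\ref{rem:theta}(2) guarantees $W\lambda \subset \lambda - Q_{0}^{+} + \BZ\delta$, so exactly one representative in each $\BZ\delta$-coset of directions satisfies condition (3).

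\textbf{Parts (2) and (3).} Since $\pi^{C}_{\eta}$ and $\pi_{\eta} = \pi^{\BB_{0}(\lambda)}_{\eta}$ have the same $\cl$-image and the same initial direction class (both satisfying (3)), and since $\cl(\pi^{C}_{\eta}) = \cl(\pi_{\eta})$ forces $\pi^{C}_{\eta}(1) - \pi_{\eta}(1) \in \BZ\delta$, we get $\pi^{C}_{\eta}(1) = \pi_{\eta}(1) + L\delta = \lambda - \beta + (-\Deg(\eta) + L)\delta$ for some $L \in \BZ$. That $L \geq 0$ should follow from a minimality property of $\BB_{0}(\lambda)$: the degree function takes its maximum value (namely the values $\le 0$ are achieved, and $\BB_{0}(\lambda)$ is the ``top'' component), equivalently $\pi_{\eta}(1)$ has the smallest $\delta$-coefficient among all elements of $\cl^{-1}(\eta) \cap \BB(\lambda)$ subject to condition (3); this is essentially built into how $\BB_{0}(\lambda)$ was singled out in \cite{NS-Degree}, and I would cite \cite[Lemma~3.1.1]{NS-Degree} together with the crystal-isomorphism picture. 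Part (3) is then immediate: $C = \BB_{0}(\lambda)$ iff $\pi^{C}_{\eta} = \pi_{\eta}$ (by the uniqueness in part (1) applied to $\BB_{0}(\lambda)$), iff $L = 0$.

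\textbf{Main obstacle.} The crux is making rigorous the claim that each connected component of $\BB(\lambda)$ is a $\BZ\delta$-translate of $\BB_{0}(\lambda)$ and that $\cl$ is injective on each component — in other words, controlling the full connected-component structure of $\BB(\lambda)$. This is exactly the content flagged in the introduction as relying on \cite{LNSSS1}, so I would quote the relevant statement from there rather than reprove it; the remaining bookkeeping (translations $t_{h}$, the coset argument for condition (3), non-negativity of $L$) is then routine.
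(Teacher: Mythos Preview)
Your overall strategy --- relate $\pi_\eta^C$ to $\pi_\eta$ via the structure of connected components --- is the right one, and is what the paper does. But several of your structural claims are incorrect, and these errors undermine every part of the argument.

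First, you assert that $\cl$ restricted to a single connected component $C$ is injective, and that the fiber $\cl^{-1}(\eta)$ is in bijection with the set of connected components. Both are false. In fact the fiber $\cl^{-1}(\eta) \cap C$ within a single component is infinite: it equals $\bigl\{\pi_\eta(t) + tM\delta + F(t)\delta \mid M \in d_\lambda\BZ\bigr\}$ for a suitable piecewise-linear function $F$ (this is \cite[Lemma~3.1.2]{NS-Degree} combined with \cite[Lemma~2.26]{NS-LMS}). The elements of this fiber differ by $t \mapsto t(M-M')\delta$, so their \emph{initial directions} differ by integer multiples of $\delta$; that is what makes condition (3) select a unique one. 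Your uniqueness argument is therefore salvageable in spirit, but not for the reason you give.

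Second, connected components are not related to $\BB_0(\lambda)$ by a constant $\delta$-shift; the relation is $C = \bigl\{\pi(t) + F(t)\delta \mid \pi \in \BB_0(\lambda)\bigr\}$ with $F$ piecewise-linear and generally non-linear. The paper extracts $F$ from the explicit ``ground state'' $\pi_\lambda^C = (\lambda - N_1\delta,\,\ldots,\,\lambda - N_{s-1}\delta,\,\lambda\,;\,\tau_0,\ldots,\tau_s)$ in $C$, supplied by \cite[Theorem~3.1]{NS-LMS} --- not \cite{LNSSS1} as you suggest. This yields $\pi_\eta^C(t) = \pi_\eta(t) + F(t)\delta + tN_1\delta$, hence $\pi_\eta^C(1) = \pi_\eta(1) + L\delta$ with $L = N_1 - N$ and $N = \sum_u(\tau_u - \tau_{u-1})N_u$.

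Third, and most importantly, the strict inequality $L > 0$ for $C \neq \BB_0(\lambda)$ (which is exactly what part (3) demands) is not a tautological ``minimality property built into how $\BB_0(\lambda)$ was singled out''; it requires an honest computation. The paper gets $N < N_1$ because $N$ is a nontrivial convex combination of $N_1 > \cdots > N_s = 0$ with $s > 1$. Without the explicit form of $\pi_\lambda^C$ you have no control over the sign of $L$, and your proposed citation of \cite[Lemma~3.1.1]{NS-Degree} does not supply it.
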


\begin{proof}
If $C = \BB_{0}(\lambda)$, then we have $\pi_{\eta}^{C} =
\pi_{\eta}$. In this case, part (1) follows from 
Proposition~\ref{prop:degree}; part (2) and 
the ``only if'' part of part (3) are obvious.

Assume that $C \ne \BB_{0}(\lambda)$.  
We see from \cite[Theorem~3.1 and Remark~2.15]{NS-LMS} that 
the connected component
$C$ contains a unique element $\pi_{\lambda}^{C}$ of the form 
%
%%%%%%%%%%%%%%
%%% eq:NS4 %%%
%%%%%%%%%%%%%%
%
\begin{equation} \label{eq:NS4}
\pi_{\lambda}^{C}
 =(\lambda-N_{1}\delta, \, \dots, \, \lambda-N_{s-1}\delta, \, \lambda \,;\, 
   \tau_{0},\,\tau_{1},\,\dots,\,\tau_{s-1},\,\tau_{s})
\end{equation}
for some integers $N_{1} > N_{2} > \cdots > N_{s-1} > N_{s}=0$ and 
rational numbers $0=\tau_{0} < \tau_{1} < \cdots < \tau_{s}=1$; 
since $C \ne \BB_{0}(\lambda)$ (and hence $\pi_{\lambda}^{C} \ne \pi_{\lambda}$), 
we have $s > 1$. From \eqref{eq:NS4}, by using \eqref{eq:path}, 
we deduce that
\begin{equation*}
\pi_{\lambda}^{C}(1) = 
  \lambda - \Biggl(
    \underbrace{\sum_{u=1}^{s} (\tau_{u}-\tau_{u-1}) N_{u}}_{=:N}\Biggr)\delta; 
\end{equation*}
note that $N \in \mathbb{Z}$ since $\pi_{\lambda}^{C}(1) \in P$,
which in turn follows from the integrality condition on LS paths
(see Definitions~\ref{dfn:achain} and \ref{dfn:LS}). 
Also, since $N_{1} > N_{2} > \cdots > N_{s-1} > N_{s}=0$ with $s > 1$, 
it follows that 
\begin{equation*}
N=\sum_{u=1}^{s} (\tau_{u}-\tau_{u-1}) N_{u} < 
\sum_{u=1}^{s} (\tau_{u}-\tau_{u-1}) N_{1} = N_{1}. 
\end{equation*}
Therefore, we have $\pi_{\lambda}^{C}(1) = 
\lambda-N_{1}\delta+L\delta$, with $L:=N_{1}-N \in \BZ_{> 0}$. 

Let us denote by $F:[0,1] \rightarrow \BR \otimes_{\BZ} P$ 
the piecewise-linear, continuous function such that 
$\pi_{\lambda}^{C}(t)=\pi_{\lambda}(t)+F(t)\delta$ 
for all $t \in [0,1]$; note that $F(0)=0$, 
%
%%%%%%%%%%%%
%%% eq:M %%%
%%%%%%%%%%%%
%
\begin{equation} \label{eq:M}
\lim_{ \begin{subarray}{c} t \to 0 \\ t > 0 \end{subarray} } \frac{F(t)-F(0)}{t-0} 
= \lim_{ \begin{subarray}{c} t \to 0 \\ t > 0 \end{subarray} } \frac{F(t)}{t} = -N_{1},
\end{equation}
and $F(1)=-N_{1}+L$. Then, by using \cite[Lemma 2.26]{NS-LMS}, we deduce that 
$C=\bigl\{\pi(t)+F(t)\delta \mid \pi \in \BB_{0}(\lambda)\bigr\}$. 
Hence it follows from \cite[Lemma 3.1.2]{NS-Degree} that 
\begin{equation*}
\bigl\{\pi \in C \mid \cl(\pi)=\eta\bigr\} = 
\bigl\{\pi_{\eta}(t)+t(M\delta)+F(t)\delta \mid 
 M \in d_{\lambda}\BZ\bigr\};
\end{equation*}
recall the notation $d_{\lambda} \in \BZ_{\ge 0}$ 
from Remark~\ref{rem:theta}\,(2).
Therefore, we conclude 
by Proposition~\ref{prop:degree} and \eqref{eq:M} 
that $\pi_{\eta}^{C}(t):=
\pi_{\eta}(t)+F(t)\delta+t(N_{1}\delta)$ is a unique 
element in $C$ satisfying the same conditions as (1) and (3) 
of Proposition~\ref{prop:degree}. 
This proves part (1) for $C \ne \BB_{0}(\lambda)$. 
Moreover, part (2) for $C \ne \BB_{0}(\lambda)$ and 
the ``if'' part of part (3) follow immediately since  
\begin{equation*}
\pi_{\eta}^{C}(1)=\pi_{\eta}(1)+F(1)\delta+N_{1}\delta = 
\pi_{\eta}(1) + L\delta
\end{equation*}
with $L > 0$. 
This completes the proof of the lemma.
\end{proof}

%==============================%
%     START SUBSECTION 0204    %
%==============================%
%
\subsection{Global energy function.}
\label{subsec:comments}

We know from \cite[Proposition~5.8]{NS-IMRN} and 
\cite[Theorem 2.1.1 and Proposition~3.4.2]{NS-Adv} 
that for each $i \in I_{0}$, the crystal $\BB(\vpi_{i})_{\cl}$ 
is isomorphic, as a crystal with weights in $P_{\cl}$, 
to the crystal basis of the level-zero fundamental 
representation $W(\vpi_{i})$ introduced in \cite[Theorem~5.17]{Kas-OnL}; 
the level-zero fundamental modules $W(\varpi_{i})$, $i \in I_{0}$, are 
often called Kirillov-Reshetikhin (KR for short) modules of 
one-column type, and accordingly their crystal bases are called 
KR crystals of one-column type. 
Also, we know the following from \cite[Theorem 3.2]{NS-Tensor}. 
Let $\bi=(i_{1},\,i_{2},\,\dots,\,i_{p})$ be an arbitrary sequence of 
elements of $I_{0}$ (with repetitions allowed), and set 
$\lambda:=\vpi_{i_{1}}+\vpi_{i_{2}}+ \cdots + \vpi_{i_{p}}$. 
Then the crystal $\BB(\lambda)_{\cl}$
is isomorphic, as a crystal with weights in $P_{\cl}$, 
to the tensor product 
$\BB_{\bi}:=
\BB(\vpi_{i_{1}})_{\cl} \otimes \BB(\vpi_{i_{2}})_{\cl} \otimes 
\cdots \otimes \BB(\vpi_{i_{p}})_{\cl}$ of KR crystals of one-column type. 
Moreover, in \cite[Theorem~4.1]{NS-Degree}, 
we proved that the degree function 
$\Deg=\Deg_{\lambda}:\BB(\lambda)_{\cl} \rightarrow \BZ_{\le 0}$ 
in \S\ref{subsec:degree} is identical, up to a constant, 
to the global energy function $D_{\bi}$ 
(which is called the energy function in \cite{LNSSS2}, and 
the right energy function in \cite{LS}; note that the order of 
tensor factors in tensor products of crystals in \cite{LS} 
is ``opposite'' to the one in this paper and \cite{LNSSS2}) on 
$\BB_{\bi}=
\BB(\vpi_{i_{1}})_{\cl} \otimes \BB(\vpi_{i_{2}})_{\cl} \otimes 
\cdots \otimes \BB(\vpi_{i_{p}})_{\cl}$ under the isomorphism 
$\Psi:\BB(\lambda)_{\cl} \stackrel{\sim}{\rightarrow} \BB_{\bi}$ 
of crystals above. 

Now we explain the relation between the degree function and 
the global energy function more precisely. 
Following \cite[\S3]{HKOTY} and \cite[\S3.3]{HKOTT} 
(see also \cite[\S4.1]{NS-Degree}), we define 
the global energy function 
$D_{\bi}:
 \BB_{\bi}=
 \BB(\vpi_{i_{1}})_{\cl} \otimes 
 \BB(\vpi_{i_{2}})_{\cl} \otimes \cdots \otimes 
 \BB(\vpi_{i_{p}})_{\cl} \rightarrow \BZ$ as follows. 
First we recall that there exists a unique isomorphism
\begin{align*}
&
\BB(\vpi_{i_{k}})_{\cl} \otimes 
\BB(\vpi_{i_{k+1}})_{\cl} \otimes \cdots \otimes 
\BB(\vpi_{i_{l-1}})_{\cl} \otimes \BB(\vpi_{i_{l}})_{\cl} \\
& \hspace*{20mm} \stackrel{\sim}{\rightarrow} 
\BB(\vpi_{i_{l}})_{\cl} \otimes 
\BB(\vpi_{i_{k}})_{\cl} \otimes \cdots \otimes 
\BB(\vpi_{i_{l-2}})_{\cl} \otimes \BB(\vpi_{i_{l-1}})_{\cl}
\end{align*}
of crystals, which is given as the composite of 
combinatorial $R$-matrices (see \cite[\S2.4]{NS-Degree}). 
For an element 
$\eta_{1} \otimes \eta_{2} \otimes \cdots \otimes 
 \eta_{p} \in \BB_{\bi}$, 
we define $\eta_{l}^{(k)} \in \BB(\vpi_{i_{l}})_{\cl}$, 
$1 \le k < l \le p$, to be the first factor
(which lies in $\BB(\vpi_{i_{l}})_{\cl}$) of 
the image of 
$\eta_{k} \otimes \eta_{k+1} \otimes 
\cdots \otimes \eta_{l} \in 
\BB(\vpi_{i_{k}})_{\cl} \otimes 
\BB(\vpi_{i_{k+1}})_{\cl} \otimes \cdots \otimes 
\BB(\vpi_{i_{l}})_{\cl}$ 
under the above isomorphism of crystals.
For convenience, we set $\eta_{l}^{(l)}:=\eta_{l}$ 
for $1 \le l \le p$. Furthermore, for each $1 \le k \le p$, 
take (and fix) an arbitrary element 
$\eta_{k}^{\flat} \in \BB(\vpi_{i_{k}})_{\cl}$ 
such that $f_{j}\eta_{k}^{\flat}=\bzero$ 
for all $j \in I_{0}$.  Then we set 
\begin{align*}
& 
D_{\bi}(
  \eta_{1} \otimes 
  \eta_{2} \otimes \cdots \otimes 
  \eta_{p}) = \nonumber \\
& \hspace*{15mm}
\sum_{1 \le k < l \le p} 
 H_{\vpi_{i_{k}}, \vpi_{i_{l}}}
 (\eta_{k} \otimes \eta_{l}^{(k+1)})
 + \sum_{k=1}^{p} 
 H_{\vpi_{i_{k}}, \vpi_{i_{k}}}
  (\eta^{\flat}_{k} \otimes \eta_{k}^{(1)}).
\end{align*}
Here, $H_{\vpi_{i_{k}}, \vpi_{i_{l}}}:
\BB(\vpi_{i_{k}})_{\cl} \otimes \BB(\vpi_{i_{l}})_{\cl}
\rightarrow \BZ$ is the local energy 
function, which is a unique $\BZ$-valued function on
$\BB(\vpi_{i_{k}})_{\cl} \otimes \BB(\vpi_{i_{l}})_{\cl}$ 
satisfying the conditions
\cite[(H1) and (H2) in Theorem~2.5.1]{NS-Degree}. 
Also, we define a constant $D_{\bi}^{\ext} \in \BZ$ by
\begin{equation*}
D_{\bi}^{\ext} := 
\sum_{k=1}^{p} 
 H_{\vpi_{i_{k}}, \vpi_{i_{k}}}
  (\eta^{\flat}_{k} \otimes \cl(\pi_{\vpi_{i_{k}}})). 
\end{equation*}
In \cite[Theorem~4.1]{NS-Degree}, we proved that 
for every $\eta \in \BB(\lambda)_{\cl}$, 
\begin{equation*}
\Deg(\eta)=D_{\bi}(\Psi(\eta))-D_{\bi}^{\ext}, 
\end{equation*}
where $\Psi:\BB(\lambda)_{\cl} \stackrel{\sim}{\rightarrow} \BB_{\bi}$ 
is the isomorphism of crystals above. 

\begin{rem}
We can verify that 
the function $D_{\bi} \circ \Psi:\BB(\lambda)_{\cl}
\rightarrow \BZ$ is a unique function on $\BB(\lambda)_{\cl}$ 
satisfying \cite[(3.2.1)]{NS-Degree} (with $\Deg$ replaced by 
$D_{\bi} \circ \Psi$) and the condition that $D_{\bi} \circ \Psi
(\cl(\pi_{\lambda})) = D_{\bi}^{\ext}$ 
(see \cite[Lemma~3.2.1\,(1)]{NS-Degree}). 
\end{rem}

%=========================%
%     START SECTION 03    %
%=========================%
%
\section{Quantum Lakshmibai-Seshadri paths.}
\label{sec:QLS}

%==============================%
%     START SUBSECTION 0301    %
%==============================%
%
\subsection{Parabolic quantum Bruhat graph.}
\label{subsec:def-QBG}

In this subsection, we fix a subset $J$ of $I_{0}$. Set
\begin{equation*}
W_{0,J}:=\langle r_{j} \mid j \in J \rangle \subset W_{0}.
\end{equation*}
It is well-known that each coset in $W_{0}/W_{0,J}$ has a unique element of 
minimal length, called the minimal coset representative for the coset; 
we denote by $W_{0}^{J} \subset W_{0}$ 
the set of minimal coset representatives for the cosets in 
$W_{0}/W_{0,J}$, and by $\mcr{\,\cdot\,}=\mcr{\,\cdot\,}_{J}:
W_{0} \twoheadrightarrow W_{0}^{J} \cong W_{0}/W_{0,J}$
the canonical projection. 
Also, we set $\Delta_{0,J}:=\Delta_{0} \cap 
\bigl(\bigoplus_{j \in J} \BZ \alpha_{j}\bigr)$, 
$\Delta_{0,J}^{\pm}:=
\Delta_{0}^{\pm} \cap \bigl(\bigoplus_{j \in J} \BZ \alpha_{j}\bigr)$, 
and 
$\rho:=(1/2)\sum_{\alpha \in \Delta_{0}^{+}}\alpha$, 
$\rho_{J}:=(1/2)\sum_{\alpha \in \Delta_{0,J}^{+}} \alpha$. 
%
%%%%%%%%%%%%%%%
%%% dfn:QBG %%%
%%%%%%%%%%%%%%%
%
\begin{dfn} \label{dfn:QBG}
The parabolic quantum Bruhat graph is 
a $(\Delta_{0}^{+} \setminus \Delta_{0,J}^{+})$-labeled, 
directed graph with vertex set $W_{0}^{J}$ and 
$(\Delta_{0}^{+} \setminus \Delta_{0,J}^{+})$-labeled, directed edges 
of the following form: $w \stackrel{\beta}{\rightarrow} \mcr{wr_{\beta}}$
for $w \in W_{0}^{J}$ and $\beta \in \Delta_{0}^{+} \setminus \Delta_{0,J}^{+}$
such that either
\begin{enumerate}
\renewcommand{\labelenumi}{(\roman{enumi})}

\item $\ell(\mcr{wr_{\beta}})=\ell(w)+1$, or 

\item $\ell(\mcr{wr_{\beta}})=\ell(w)-2\pair{\rho-\rho_{J}}{\beta^{\vee}}+1$;
\end{enumerate}
if (i) holds (resp., (ii) holds), then the edge is called a Bruhat edge 
(resp., a quantum edge). 
\end{dfn}

\begin{ex} \label{ex1}
Assume that $\Fg$ is of type $A_{2}^{(1)}$ (and hence $\Delta_{0}$ and $W_{0}$ 
are of type $A_{2}$), and $J=\emptyset$. 
Then the quantum Bruhat graph is as follows, where
$\theta=\alpha_{1}+\alpha_{2} \in \Delta_{0}^{+}$, the highest root of $A_{2}$: 
\begin{center}
\hspace{-5mm}
%WinTpicVersion4.30a
{\unitlength 0.1in%
\begin{picture}( 38.6000, 34.9200)(  2.4000,-38.2700)%
% STR 2 0 3 0 Black White
% 4 2200 300 2200 400 5 0 0 0
% $w_0$
\put(22.0000,-4.0000){\makebox(0,0){$w_0$}}%
% STR 2 0 3 0 Black White
% 4 1000 1100 1000 1200 5 0 0 0
% $r_1r_2$
\put(10.0000,-12.0000){\makebox(0,0){$r_1r_2$}}%
% STR 2 0 3 0 Black White
% 4 1000 2500 1000 2600 5 0 0 0
% $r_1$
\put(10.0000,-26.0000){\makebox(0,0){$r_1$}}%
% STR 2 0 3 0 Black White
% 4 2200 3300 2200 3400 5 0 0 0
% $e$
\put(22.0000,-34.0000){\makebox(0,0){$e$}}%
% STR 2 0 3 0 Black White
% 4 3400 2500 3400 2600 5 0 0 0
% $r_2$
\put(34.0000,-26.0000){\makebox(0,0){$r_2$}}%
% STR 2 0 3 0 Black White
% 4 3400 1100 3400 1200 5 0 0 0
% $r_2r_1$
\put(34.0000,-12.0000){\makebox(0,0){$r_2r_1$}}%
% VECTOR 2 0 3 0 Black White
% 2 2000 3200 1200 2800
% 
\special{pn 8}%
\special{pa 2000 3200}%
\special{pa 1200 2800}%
\special{fp}%
\special{sh 1}%
\special{pa 1200 2800}%
\special{pa 1251 2848}%
\special{pa 1248 2824}%
\special{pa 1269 2812}%
\special{pa 1200 2800}%
\special{fp}%
% VECTOR 2 0 3 0 Black White
% 2 2400 3200 3200 2800
% 
\special{pn 8}%
\special{pa 2400 3200}%
\special{pa 3200 2800}%
\special{fp}%
\special{sh 1}%
\special{pa 3200 2800}%
\special{pa 3131 2812}%
\special{pa 3152 2824}%
\special{pa 3149 2848}%
\special{pa 3200 2800}%
\special{fp}%
% VECTOR 2 0 3 0 Black White
% 2 3300 2400 3300 1400
% 
\special{pn 8}%
\special{pa 3300 2400}%
\special{pa 3300 1400}%
\special{fp}%
\special{sh 1}%
\special{pa 3300 1400}%
\special{pa 3280 1467}%
\special{pa 3300 1453}%
\special{pa 3320 1467}%
\special{pa 3300 1400}%
\special{fp}%
% VECTOR 2 0 3 0 Black White
% 2 3200 1000 2400 600
% 
\special{pn 8}%
\special{pa 3200 1000}%
\special{pa 2400 600}%
\special{fp}%
\special{sh 1}%
\special{pa 2400 600}%
\special{pa 2451 648}%
\special{pa 2448 624}%
\special{pa 2469 612}%
\special{pa 2400 600}%
\special{fp}%
% VECTOR 2 0 3 0 Black White
% 2 1200 1000 2000 600
% 
\special{pn 8}%
\special{pa 1200 1000}%
\special{pa 2000 600}%
\special{fp}%
\special{sh 1}%
\special{pa 2000 600}%
\special{pa 1931 612}%
\special{pa 1952 624}%
\special{pa 1949 648}%
\special{pa 2000 600}%
\special{fp}%
% VECTOR 2 0 3 0 Black White
% 2 1100 2400 1100 1400
% 
\special{pn 8}%
\special{pa 1100 2400}%
\special{pa 1100 1400}%
\special{fp}%
\special{sh 1}%
\special{pa 1100 1400}%
\special{pa 1080 1467}%
\special{pa 1100 1453}%
\special{pa 1120 1467}%
\special{pa 1100 1400}%
\special{fp}%
% VECTOR 2 2 3 0 Black White
% 2 1000 2800 1800 3200
% 
\special{pn 8}%
\special{pa 1000 2800}%
\special{pa 1800 3200}%
\special{dt 0.045}%
\special{sh 1}%
\special{pa 1800 3200}%
\special{pa 1749 3152}%
\special{pa 1752 3176}%
\special{pa 1731 3188}%
\special{pa 1800 3200}%
\special{fp}%
% VECTOR 2 2 3 0 Black White
% 2 3400 2800 2600 3200
% 
\special{pn 8}%
\special{pa 3400 2800}%
\special{pa 2600 3200}%
\special{dt 0.045}%
\special{sh 1}%
\special{pa 2600 3200}%
\special{pa 2669 3188}%
\special{pa 2648 3176}%
\special{pa 2651 3152}%
\special{pa 2600 3200}%
\special{fp}%
% VECTOR 2 2 3 0 Black White
% 2 3500 1400 3500 2400
% 
\special{pn 8}%
\special{pa 3500 1400}%
\special{pa 3500 2400}%
\special{dt 0.045}%
\special{sh 1}%
\special{pa 3500 2400}%
\special{pa 3520 2333}%
\special{pa 3500 2347}%
\special{pa 3480 2333}%
\special{pa 3500 2400}%
\special{fp}%
% VECTOR 2 2 3 0 Black White
% 2 900 1400 900 2400
% 
\special{pn 8}%
\special{pa 900 1400}%
\special{pa 900 2400}%
\special{dt 0.045}%
\special{sh 1}%
\special{pa 900 2400}%
\special{pa 920 2333}%
\special{pa 900 2347}%
\special{pa 880 2333}%
\special{pa 900 2400}%
\special{fp}%
% VECTOR 2 2 3 0 Black White
% 2 1800 600 1000 1000
% 
\special{pn 8}%
\special{pa 1800 600}%
\special{pa 1000 1000}%
\special{dt 0.045}%
\special{sh 1}%
\special{pa 1000 1000}%
\special{pa 1069 988}%
\special{pa 1048 976}%
\special{pa 1051 952}%
\special{pa 1000 1000}%
\special{fp}%
% VECTOR 2 2 3 0 Black White
% 2 2600 600 3400 1000
% 
\special{pn 8}%
\special{pa 2600 600}%
\special{pa 3400 1000}%
\special{dt 0.045}%
\special{sh 1}%
\special{pa 3400 1000}%
\special{pa 3349 952}%
\special{pa 3352 976}%
\special{pa 3331 988}%
\special{pa 3400 1000}%
\special{fp}%
% VECTOR 2 0 3 0 Black White
% 2 1200 2600 3200 1400
% 
\special{pn 8}%
\special{pa 1200 2600}%
\special{pa 3200 1400}%
\special{fp}%
\special{sh 1}%
\special{pa 3200 1400}%
\special{pa 3133 1417}%
\special{pa 3154 1427}%
\special{pa 3153 1451}%
\special{pa 3200 1400}%
\special{fp}%
% VECTOR 2 0 3 0 Black White
% 2 3200 2600 1200 1400
% 
\special{pn 8}%
\special{pa 3200 2600}%
\special{pa 1200 1400}%
\special{fp}%
\special{sh 1}%
\special{pa 1200 1400}%
\special{pa 1247 1451}%
\special{pa 1246 1427}%
\special{pa 1267 1417}%
\special{pa 1200 1400}%
\special{fp}%
% LINE 2 2 3 0 Black White
% 2 2400 400 4000 400
% 
\special{pn 8}%
\special{pa 2400 400}%
\special{pa 4000 400}%
\special{dt 0.045}%
% LINE 2 2 3 0 Black White
% 2 4000 400 4000 3400
% 
\special{pn 8}%
\special{pa 4000 400}%
\special{pa 4000 3400}%
\special{dt 0.045}%
% VECTOR 2 2 3 0 Black White
% 2 4000 3400 2400 3400
% 
\special{pn 8}%
\special{pa 4000 3400}%
\special{pa 2400 3400}%
\special{dt 0.045}%
\special{sh 1}%
\special{pa 2400 3400}%
\special{pa 2467 3420}%
\special{pa 2453 3400}%
\special{pa 2467 3380}%
\special{pa 2400 3400}%
\special{fp}%
% STR 2 0 3 0 Black White
% 4 2600 2900 2600 3000 2 0 0 0
% $\alpha_2$
\put(26.0000,-30.0000){\makebox(0,0)[lb]{$\alpha_2$}}%
% STR 2 0 3 0 Black White
% 4 1600 2900 1600 3000 2 0 0 0
% $\alpha_1$
\put(16.0000,-30.0000){\makebox(0,0)[lb]{$\alpha_1$}}%
% STR 2 0 3 0 Black White
% 4 2600 2100 2600 2200 2 0 0 0
% $\theta$
\put(26.0000,-22.0000){\makebox(0,0)[lb]{$\theta$}}%
% STR 2 0 3 0 Black White
% 4 2800 1500 2800 1600 3 0 0 0
% $\theta$
\put(28.0000,-16.0000){\makebox(0,0)[rb]{$\theta$}}%
% STR 2 0 3 0 Black White
% 4 3600 1900 3600 2000 2 0 0 0
% $\alpha_1$
\put(36.0000,-20.0000){\makebox(0,0)[lb]{$\alpha_1$}}%
% STR 2 0 3 0 Black White
% 4 800 1900 800 2000 3 0 0 0
% $\alpha_2$
\put(8.0000,-20.0000){\makebox(0,0)[rb]{$\alpha_2$}}%
% STR 2 0 3 0 Black White
% 4 3200 700 3200 800 2 0 0 0
% $\alpha_2$
\put(32.0000,-8.0000){\makebox(0,0)[lb]{$\alpha_2$}}%
% STR 2 0 3 0 Black White
% 4 1200 700 1200 800 3 0 0 0
% $\alpha_1$
\put(12.0000,-8.0000){\makebox(0,0)[rb]{$\alpha_1$}}%
% STR 2 0 3 0 Black White
% 4 4100 1900 4100 2000 2 0 0 0
% $\theta$
\put(41.0000,-20.0000){\makebox(0,0)[lb]{$\theta$}}%
% VECTOR 2 0 3 0 Black White
% 2 800 3800 1200 3800
% 
\special{pn 8}%
\special{pa 800 3800}%
\special{pa 1200 3800}%
\special{fp}%
\special{sh 1}%
\special{pa 1200 3800}%
\special{pa 1133 3780}%
\special{pa 1147 3800}%
\special{pa 1133 3820}%
\special{pa 1200 3800}%
\special{fp}%
% STR 2 0 3 0 Black White
% 4 1800 3700 1800 3800 5 0 0 0
% Bruhat edge
\put(18.0000,-38.0000){\makebox(0,0){Bruhat edge}}%
% VECTOR 2 2 3 0 Black White
% 2 2600 3800 3000 3800
% 
\special{pn 8}%
\special{pa 2600 3800}%
\special{pa 3000 3800}%
\special{dt 0.045}%
\special{sh 1}%
\special{pa 3000 3800}%
\special{pa 2933 3780}%
\special{pa 2947 3800}%
\special{pa 2933 3820}%
\special{pa 3000 3800}%
\special{fp}%
% STR 2 0 3 0 Black White
% 4 3650 3700 3650 3800 5 0 0 0
% quantum edge
\put(36.5000,-38.0000){\makebox(0,0){quantum edge}}%
% STR 2 0 3 0 Black White
% 4 1200 2900 1200 3000 4 0 0 0
% $\alpha_1$
\put(12.0000,-30.0000){\makebox(0,0)[rt]{$\alpha_1$}}%
% STR 2 0 3 0 Black White
% 4 3200 2900 3200 3000 1 0 0 0
% $\alpha_2$
\put(32.0000,-30.0000){\makebox(0,0)[lt]{$\alpha_2$}}%
% STR 2 0 3 0 Black White
% 4 3200 1900 3200 2000 3 0 0 0
% $\alpha_1$
\put(32.0000,-20.0000){\makebox(0,0)[rb]{$\alpha_1$}}%
% STR 2 0 3 0 Black White
% 4 1200 1900 1200 2000 2 0 0 0
% $\alpha_2$
\put(12.0000,-20.0000){\makebox(0,0)[lb]{$\alpha_2$}}%
% STR 2 0 3 0 Black White
% 4 1800 700 1800 800 1 0 0 0
% $\alpha_1$
\put(18.0000,-8.0000){\makebox(0,0)[lt]{$\alpha_1$}}%
% STR 2 0 3 0 Black White
% 4 2600 700 2600 800 4 0 0 0
% $\alpha_2$
\put(26.0000,-8.0000){\makebox(0,0)[rt]{$\alpha_2$}}%
\end{picture}}%

\end{center}
\end{ex}

Let $x,\,y \in W_{0}^{J}$. A directed path $\bd$
from $y$ to $x$ in the parabolic quantum Bruhat graph is, 
by definition, a pair of a sequence $w_{0},\,w_{1},\,\dots,\,w_{n}$ 
of elements in $W_{0}^{J}$ and a sequence 
$\beta_{1},\,\beta_{2},\,\dots,\,\beta_{n}$ of 
elements in $\Delta_{0}^{+} \setminus \Delta_{0,J}^{+}$ 
such that in the parabolic quantum Bruhat graph, 
%
%%%%%%%%%%%%%
%%% eq:dp %%%
%%%%%%%%%%%%%
%
\begin{equation} \label{eq:dp}
\bd : 
x=
 w_{0} \stackrel{\beta_{1}}{\leftarrow}
 w_{1} \stackrel{\beta_{2}}{\leftarrow} \cdots 
       \stackrel{\beta_{n}}{\leftarrow}
 w_{n}=y. 
\end{equation}
A directed path $\bd$ from $y$ to $x$ is said to be 
shortest if its length $n$ is minimal among 
all possible directed paths from $y$ to $x$; 
let $\len{x}{y}$ denote the length of a shortest directed path 
from $y$ to $x$ in the parabolic quantum Bruhat graph.
Also, we define the weight $\wt(\bd) \in 
Q^{\vee}=\bigoplus_{j \in I_{0}}\BZ \alpha_{j}^{\vee}$ of 
a directed path of the form \eqref{eq:dp} by
%
%%%%%%%%%%%%%%%
%%% eq:wtdp %%%
%%%%%%%%%%%%%%%
%
\begin{equation} \label{eq:wtdp}
\wt (\bd) := \sum_{
 \begin{subarray}{c}
 1 \le k \le n\,; \\[1mm]
 \text{$w_{k-1} \stackrel{\beta_{k}}{\leftarrow}
 w_{k}$ is } \\ \text{a quantum edge}
 \end{subarray}}
\beta_{k}^{\vee}.
\end{equation}

We recall the following proposition from 
\cite[Theorem~6.5]{LNSSS1}. 
%
%%%%%%%%%%%%%%%
%%% prop:QB %%%
%%%%%%%%%%%%%%%
%
\begin{prop} \label{prop:QB}
Set $\Lambda:=\cl(\lambda) \in P_{\cl}$. 
\begin{enu}

\item 
Let $w \in W_{0}^{J}$ and $\beta \in \Delta_{0}^{+} \setminus \Delta_{0,J}^{+}$ 
be such that $\mcr{wr_{\beta}} \stackrel{\beta}{\longleftarrow} w$ 
in the parabolic quantum Bruhat graph. We set
\begin{equation*}
\xi:=
\begin{cases}
 w\beta & 
 \text{\rm if $\mcr{wr_{\beta}} \stackrel{\beta}{\longleftarrow} w$ 
 is a Bruhat edge}, \\[3mm]
 w\beta+\delta & 
 \text{\rm if $\mcr{wr_{\beta}} \stackrel{\beta}{\longleftarrow} w$ 
 is a quantum edge}.
\end{cases}
\end{equation*}
Then, $\xi \in \prr$, and $r_{\xi}\nu \gtrdot \nu$ 
for all $\nu \in W\lambda$ such that $\cl(\nu)=w\Lambda$. 

\item
Let $\mu,\,\nu \in W\lambda$ be such that $\mu \gtrdot \nu$, 
and let $\xi \in \prr$ be the positive real root such that 
$r_{\xi}\mu=\nu$; recall from Remark~\ref{rem:cover} that 
$\xi \in \Delta_{0}^{+} \sqcup \bigl\{-\gamma+\delta \mid 
\gamma \in \Delta_{0}^{+}\bigr\}$. Let $w \in W_{0}^{J}$ be 
a unique element in $W_{0}^{J}$ such that 
$\cl(\nu)=w\Lambda$, and set
\begin{equation*}
\beta:=
\begin{cases}
 w^{-1}\xi & \text{\rm if $\xi \in \Delta_{0}^{+}$}, \\[1.5mm]
 w^{-1}(\xi-\delta) & \text{\rm if $\xi \in \bigl\{-\gamma+\delta \mid 
\gamma \in \Delta_{0}^{+}\bigr\}$}. 
\end{cases}
\end{equation*}
Then, $\beta \in \Delta_{0}^{+} \setminus \Delta_{J}^{+}$, 
and $\mcr{wr_{\beta}} \stackrel{\beta}{\longleftarrow} w$ 
in the parabolic quantum Bruhat graph; note that 
$\cl(\mu)=\mcr{wr_{\beta}}\Lambda$. Moreover, 
the edge $\mcr{wr_{\beta}} \stackrel{\beta}{\longleftarrow} w$ is 
a Bruhat {\rm(}resp., quantum{\rm)} edge if 
$\xi \in \Delta_{0}^{+}$ {\rm(}resp., 
$\xi \in \bigl\{-\gamma+\delta \mid 
\gamma \in \Delta_{0}^{+}\bigr\}${\rm)}. 
\end{enu}
\end{prop}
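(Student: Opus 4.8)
Set $J := \{\, j \in I_{0} \mid \pair{\lambda}{\alpha_{j}^{\vee}} = 0 \,\}$, so that $W_{0,J}$ is exactly the stabilizer in $W_{0}$ of $\lambda$, hence of $\Lambda = \cl(\lambda)$; thus $w \mapsto w\Lambda$ is a bijection $W_{0}^{J} \to W_{0}\Lambda = \cl(W\lambda)$ (Remark~\ref{rem:orbcl}), and every $\nu \in W\lambda$ determines a unique $w \in W_{0}^{J}$ with $\cl(\nu) = w\Lambda$. The plan is to set up, in both directions, a correspondence between cover relations $\mu \gtrdot \nu$ in the poset $W\lambda$ (Definition~\ref{dfn:Bruhat}) and edges of the parabolic quantum Bruhat graph, treating separately the two types of reflecting root allowed by Remark~\ref{rem:cover}: $\xi \in \Delta_{0}^{+}$ (which will give the Bruhat edges) and $\xi = -\gamma+\delta$ with $\gamma \in \Delta_{0}^{+}$ (the quantum edges).

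I would first clear away the structural claims, which come down to elementary manipulations with affine reflections. For a level-zero weight $\mu$ and $\gamma \in \Delta_{0}^{+}$ one has $r_{-\gamma+\delta}\mu = r_{\gamma}\mu + \pair{\mu}{\gamma^{\vee}}\delta$, so $\cl \circ r_{-\gamma+\delta} = r_{\gamma} \circ \cl$ on $W\lambda$, while $\cl \circ r_{\xi} = r_{\xi} \circ \cl$ for $\xi \in \Delta_{0}^{+}$. Together with $W_{0}$-invariance of the pairing $P_{\cl} \times P^{\vee}_{\cl} \to \BZ$ and the identity $wr_{\beta} = r_{\xi}w$ (resp.\ $wr_{\beta} = r_{\gamma}w$) with $\beta$ as in the statement, this gives at once $\cl(r_{\xi}\nu) = \mcr{wr_{\beta}}\Lambda$ (and $\cl(\mu) = \mcr{wr_{\beta}}\Lambda$ in part~(2)). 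That $\xi \in \prr$ is immediate in the quantum case (any real root of the form ``element of $\Delta_{0}$ plus $\delta$'' is positive) and follows in the Bruhat case from $w\beta \in \Delta_{0}^{+}$, which is forced because a Bruhat edge raises the length. That $\beta \in \Delta_{0}^{+}$ is forced by the sign condition in a cover, $\pair{\mu}{\xi^{\vee}} < 0$: using $\cl(\nu) = w\Lambda$ and dominance of $\Lambda$ this translates into $w^{-1}\gamma \in \Delta_{0}^{-}$ in the quantum case, and directly into positivity in the Bruhat case. Finally $\beta \notin \Delta_{0,J}^{+}$, because otherwise $r_{\beta}$ would fix $\Lambda$ and force $\cl(\mu) = \cl(\nu)$, which is incompatible with $\pair{\mu}{\xi^{\vee}} \ne 0$ in view of Remark~\ref{rem:LP} and $\delta = \sum_{j \in I}a_{j}\alpha_{j}$ with all $a_{j} > 0$. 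The one inequality needed in part~(1), namely $\pair{\nu}{\xi^{\vee}} > 0$ (so that $r_{\xi}\nu$ lies above $\nu$), is then automatic: it equals $\pair{\Lambda}{\beta^{\vee}}$, which is positive precisely because $\beta \in \Delta_{0}^{+}\setminus\Delta_{0,J}^{+}$.

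The heart of the matter is the equivalence
\[
r_{\xi}\nu \gtrdot \nu \quad\Longleftrightarrow\quad \text{there is an edge } \mcr{wr_{\beta}} \stackrel{\beta}{\longleftarrow} w \text{ in the parabolic quantum Bruhat graph,}
\]
this edge being a Bruhat edge exactly when $\xi \in \Delta_{0}^{+}$. Since $r_{\xi}\nu \ge \nu$ always holds once $\pair{\nu}{\xi^{\vee}} > 0$ (via the length-one chain $r_{\xi}\nu \to \nu$), the only issue is whether $\dist(r_{\xi}\nu,\nu) = 1$, i.e.\ whether no chain of length $\ge 2$ intervenes. In the Bruhat case this is clean: since $\xi \in \Delta_{0}^{+}$, the difference $\nu - r_{\xi}\nu$ lies in $Q_{0}^{+}$, so $r_{\xi}\nu$, $\nu$, and anything squeezed between them all have the same $\delta$-component, and the question takes place at a single $\delta$-level, where (by the standard description of the $W_{0}$-orbit of the dominant weight $\lambda$) $\dist(r_{\xi}\nu,\nu) = 1$ amounts to $\mcr{wr_{\beta}}$ covering $w$ in the Bruhat order on $W_{0}/W_{0,J}$, i.e.\ to $\ell(\mcr{wr_{\beta}}) = \ell(w)+1$, which is condition~(i) of Definition~\ref{dfn:QBG}. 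In the quantum case ($\xi = -\gamma+\delta$) the $\delta$-component changes by $\pair{\lambda}{\beta^{\vee}}$, so one must work in the whole orbit: I would project any saturated chain from $r_{\xi}\nu$ down to $\nu$ onto a directed path from $w$ to $\mcr{wr_{\beta}}$ (each cover mapping to an edge via part~(2), which is proved first), invoke the length-change identity $\ell(x) - \ell(y) = (\text{length of }\bd) - 2\pair{\rho-\rho_{J}}{\wt(\bd)}$ valid for every directed path $\bd$ in the parabolic quantum Bruhat graph, and note that $\pair{\lambda}{\wt(\bd)}$ is pinned down by the $\delta$-gap. Since $2\pair{\rho-\rho_{J}}{\,\cdot\,}$ takes even integer values on $Q_{0}^{\vee}$, all directed paths from $w$ to $\mcr{wr_{\beta}}$ have lengths of a single fixed parity; an existing edge has length $1$, so no intervening chain of length $2$ is possible, and the count $-2\pair{\rho-\rho_{J}}{\beta^{\vee}}+1$ for a quantum edge in Definition~\ref{dfn:QBG}(ii) is precisely the number of Bruhat steps absorbed when the translation part of $r_{-\gamma+\delta}$ is written out in $W_{0}^{J}$.

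The main obstacle is to complete the quantum case by ruling out \emph{longer} intervening chains — that is, to bound $\dist(r_{\xi}\nu,\nu)$ from above, the reverse inclusion ``an edge yields a cover'' being comparatively soft. The plan is to establish, by induction on $\ell(w)$ (equivalently on $\dist$), the sharper fact that $\dist$ in $W\lambda$ between two comparable elements equals the common length of all directed paths in the parabolic quantum Bruhat graph joining the corresponding vertices and carrying the weight prescribed by their difference (all such paths having equal length, again by the length-change identity), so that every saturated chain has that length; here one invokes the standard ``diamond''\,/\,tilted-Bruhat and deletion lemmas for the parabolic quantum Bruhat graph, together with the description of $W\lambda$ and of the component $\BB_{0}(\lambda)$ from \cite{NS-LMS}, to show that a putative intermediate element would produce a strictly shorter such path, which cannot coexist with a single edge of the same weight. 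Granting this, parts~(1) and~(2) follow by combining the structural identifications of the second paragraph with the equivalence above, and the Bruhat-versus-quantum dichotomy in part~(2) is read off directly from whether $\xi \in \Delta_{0}^{+}$ or $\xi \in \{\,-\gamma+\delta \mid \gamma \in \Delta_{0}^{+}\,\}$. \bqed
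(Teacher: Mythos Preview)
The paper does not prove this proposition in-house: it is quoted verbatim from \cite[Theorem~6.5]{LNSSS1}, so there is no in-paper argument to compare against. Your structural reductions in the second paragraph --- the affine-reflection identities, the positivity of $\xi$ and of $\beta$, the exclusion $\beta \notin \Delta_{0,J}^{+}$, and the computation $\pair{\nu}{\xi^{\vee}}=\pair{\Lambda}{\beta^{\vee}}>0$ --- are correct, and the Bruhat case is essentially fine once one checks that the order from Definition~\ref{dfn:Bruhat} restricted to a single $\delta$-level of $W\lambda$ coincides with the parabolic Bruhat order on $W_{0}^{J}$.

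The quantum case, however, has real gaps. First, your parity claim is false: $2\pair{\rho-\rho_{J}}{\,\cdot\,}$ need not take \emph{even} integer values on $Q_{0}^{\vee}$. In type $B_{2}$ with $\alpha_{1}$ long, $\alpha_{2}$ short, and $J=\{2\}$ (i.e.\ $\lambda\in\BZ_{>0}\vpi_{1}$), one has $\rho_{J}=\alpha_{2}/2$ and $2\pair{\rho-\rho_{J}}{\alpha_{1}^{\vee}}=3$; so the parity argument does not even exclude length-$2$ chains. Second, and more fundamentally, the length-change identity $|\bd|=\ell(x)-\ell(y)+2\pair{\rho-\rho_{J}}{\wt(\bd)}$ determines $|\bd|$ only once $\wt(\bd)$ is known, but the $\delta$-gap between $r_{\xi}\nu$ and $\nu$ pins down merely $\pair{\Lambda}{\wt(\bd)}$, not $\wt(\bd)$ itself; distinct directed paths between the same vertices of the parabolic quantum Bruhat graph can carry different weights and hence different lengths, so nothing in your sketch forces a projected saturated chain to have length~$1$. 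Third, your treatment of part~(1) in the quantum case invokes part~(2), yet you never supply an independent argument for the quantum case of part~(2): from the bare hypothesis $\mu \gtrdot \nu$ with $\xi=-\gamma+\delta$, why must the exact length identity $\ell(\mcr{wr_{\beta}})=\ell(w)-2\pair{\rho-\rho_{J}}{\beta^{\vee}}+1$ hold? The appeal to unspecified ``diamond/tilted-Bruhat and deletion lemmas'' and to an induction on $\ell(w)$ is not a proof. The argument in \cite{LNSSS1} goes through a lift of the parabolic quantum Bruhat graph into the affine Bruhat order (the ``parabolic-to-affine'' correspondence), where genuine Coxeter-group cover relations are available; that machinery is what your sketch is tacitly relying on but not reproducing.
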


%==============================%
%     START SUBSECTION 0302    %
%==============================%
%
\subsection{Definition of quantum Lakshmibai-Seshadri paths.}
\label{subsec:QLS}

In this subsection, 
we fix a level-zero dominant integral weight
$\lambda \in \sum_{i \in I_{0}} \BZ_{\ge 0} \vpi_{i}$, and 
set $\Lambda:=\cl(\lambda)$ for simplicity of notation. 
Also, we set 
\begin{equation*}
J:=\bigl\{j \in I_{0} \mid \pair{\Lambda}{\alpha_{j}^{\vee}}=0 \bigr\} 
 \subset I_{0}.
\end{equation*}
%
%%%%%%%%%%%%%%%%%%%%%%
%%% dfn:QBG-achain %%%
%%%%%%%%%%%%%%%%%%%%%%
%
\begin{dfn} \label{dfn:QBG-achain}
Let $x,\,y \in W_{0}^{J}$, and let $\sigma \in \BQ$ 
be such that $0 < \sigma < 1$. A directed $\sigma$-path 
from $y$ to $x$ is, by definition, a directed path 
\begin{equation*}
x=w_{0} \stackrel{\beta_{1}}{\leftarrow} w_{1}
\stackrel{\beta_{2}}{\leftarrow} w_{2} 
\stackrel{\beta_{3}}{\leftarrow} \cdots 
\stackrel{\beta_{n}}{\leftarrow} w_{n}=y
\end{equation*}
from $y$ to $x$ in the parabolic quantum Bruhat graph 
satisfying the condition that 
\begin{equation*}
\sigma \pair{\Lambda}{\beta_{k}^{\vee}} \in \BZ 
\quad \text{for all $1 \le k \le n$}.
\end{equation*}
\end{dfn}
%
%%%%%%%%%%%%%%
%%% rem:QB %%%
%%%%%%%%%%%%%%
%
\begin{rem} \label{rem:QB}
Keep the notation and setting of Proposition~\ref{prop:QB}\,(1). 
Let $0 < \sigma < 1$ be a rational number. 
If an edge $\mcr{wr_{\beta}} \stackrel{\beta}{\longleftarrow} w$ satisfies 
$\sigma \pair{\Lambda}{\beta^{\vee}} \in \BZ$, 
then $r_{\xi}\nu \gtrdot \nu$ is a $\sigma$-chain 
for $(r_{\xi}\nu,\,\nu)$. Indeed, we have
$\sigma\pair{\nu}{\xi^{\vee}} = 
\sigma \pair{w\Lambda}{w\beta^{\vee}}= 
\sigma \pair{\Lambda}{\beta^{\vee}} \in \BZ$.
\end{rem}
\begin{ex} \label{ex2}
Assume that $\Fg$ is of type $A_{2}^{(1)}$, and $\lambda=2\vpi_{1}+\vpi_{2}$. 
Then, $J$ is the empty set, and hence the corresponding 
(parabolic) quantum Bruhat graph is the one in Example~\ref{ex1}. 
In the figure below, the symbol $[a]$ on an edge indicates that the value 
of $\Lambda=\cl(\lambda)$ at the coroot of the label of the edge is equal to $a$: 
\begin{center}
%WinTpicVersion4.30a
{\unitlength 0.1in%
\begin{picture}( 34.4000, 34.9200)(  6.6000,-38.2700)%
% STR 2 0 3 0 Black White
% 4 2200 300 2200 400 5 0 0 0
% $w_0$
\put(22.0000,-4.0000){\makebox(0,0){$w_0$}}%
% STR 2 0 3 0 Black White
% 4 1000 1100 1000 1200 5 0 0 0
% $r_1r_2$
\put(10.0000,-12.0000){\makebox(0,0){$r_1r_2$}}%
% STR 2 0 3 0 Black White
% 4 1000 2500 1000 2600 5 0 0 0
% $r_1$
\put(10.0000,-26.0000){\makebox(0,0){$r_1$}}%
% STR 2 0 3 0 Black White
% 4 2200 3300 2200 3400 5 0 0 0
% $e$
\put(22.0000,-34.0000){\makebox(0,0){$e$}}%
% STR 2 0 3 0 Black White
% 4 3400 2500 3400 2600 5 0 0 0
% $r_2$
\put(34.0000,-26.0000){\makebox(0,0){$r_2$}}%
% STR 2 0 3 0 Black White
% 4 3400 1100 3400 1200 5 0 0 0
% $r_2r_1$
\put(34.0000,-12.0000){\makebox(0,0){$r_2r_1$}}%
% VECTOR 2 0 3 0 Black White
% 2 2000 3200 1200 2800
% 
\special{pn 8}%
\special{pa 2000 3200}%
\special{pa 1200 2800}%
\special{fp}%
\special{sh 1}%
\special{pa 1200 2800}%
\special{pa 1251 2848}%
\special{pa 1248 2824}%
\special{pa 1269 2812}%
\special{pa 1200 2800}%
\special{fp}%
% VECTOR 2 0 3 0 Black White
% 2 2400 3200 3200 2800
% 
\special{pn 8}%
\special{pa 2400 3200}%
\special{pa 3200 2800}%
\special{fp}%
\special{sh 1}%
\special{pa 3200 2800}%
\special{pa 3131 2812}%
\special{pa 3152 2824}%
\special{pa 3149 2848}%
\special{pa 3200 2800}%
\special{fp}%
% VECTOR 2 0 3 0 Black White
% 2 3300 2400 3300 1400
% 
\special{pn 8}%
\special{pa 3300 2400}%
\special{pa 3300 1400}%
\special{fp}%
\special{sh 1}%
\special{pa 3300 1400}%
\special{pa 3280 1467}%
\special{pa 3300 1453}%
\special{pa 3320 1467}%
\special{pa 3300 1400}%
\special{fp}%
% VECTOR 2 0 3 0 Black White
% 2 3200 1000 2400 600
% 
\special{pn 8}%
\special{pa 3200 1000}%
\special{pa 2400 600}%
\special{fp}%
\special{sh 1}%
\special{pa 2400 600}%
\special{pa 2451 648}%
\special{pa 2448 624}%
\special{pa 2469 612}%
\special{pa 2400 600}%
\special{fp}%
% VECTOR 2 0 3 0 Black White
% 2 1200 1000 2000 600
% 
\special{pn 8}%
\special{pa 1200 1000}%
\special{pa 2000 600}%
\special{fp}%
\special{sh 1}%
\special{pa 2000 600}%
\special{pa 1931 612}%
\special{pa 1952 624}%
\special{pa 1949 648}%
\special{pa 2000 600}%
\special{fp}%
% VECTOR 2 0 3 0 Black White
% 2 1100 2400 1100 1400
% 
\special{pn 8}%
\special{pa 1100 2400}%
\special{pa 1100 1400}%
\special{fp}%
\special{sh 1}%
\special{pa 1100 1400}%
\special{pa 1080 1467}%
\special{pa 1100 1453}%
\special{pa 1120 1467}%
\special{pa 1100 1400}%
\special{fp}%
% VECTOR 2 2 3 0 Black White
% 2 1000 2800 1800 3200
% 
\special{pn 8}%
\special{pa 1000 2800}%
\special{pa 1800 3200}%
\special{dt 0.045}%
\special{sh 1}%
\special{pa 1800 3200}%
\special{pa 1749 3152}%
\special{pa 1752 3176}%
\special{pa 1731 3188}%
\special{pa 1800 3200}%
\special{fp}%
% VECTOR 2 2 3 0 Black White
% 2 3400 2800 2600 3200
% 
\special{pn 8}%
\special{pa 3400 2800}%
\special{pa 2600 3200}%
\special{dt 0.045}%
\special{sh 1}%
\special{pa 2600 3200}%
\special{pa 2669 3188}%
\special{pa 2648 3176}%
\special{pa 2651 3152}%
\special{pa 2600 3200}%
\special{fp}%
% VECTOR 2 2 3 0 Black White
% 2 3500 1400 3500 2400
% 
\special{pn 8}%
\special{pa 3500 1400}%
\special{pa 3500 2400}%
\special{dt 0.045}%
\special{sh 1}%
\special{pa 3500 2400}%
\special{pa 3520 2333}%
\special{pa 3500 2347}%
\special{pa 3480 2333}%
\special{pa 3500 2400}%
\special{fp}%
% VECTOR 2 2 3 0 Black White
% 2 900 1400 900 2400
% 
\special{pn 8}%
\special{pa 900 1400}%
\special{pa 900 2400}%
\special{dt 0.045}%
\special{sh 1}%
\special{pa 900 2400}%
\special{pa 920 2333}%
\special{pa 900 2347}%
\special{pa 880 2333}%
\special{pa 900 2400}%
\special{fp}%
% VECTOR 2 2 3 0 Black White
% 2 1800 600 1000 1000
% 
\special{pn 8}%
\special{pa 1800 600}%
\special{pa 1000 1000}%
\special{dt 0.045}%
\special{sh 1}%
\special{pa 1000 1000}%
\special{pa 1069 988}%
\special{pa 1048 976}%
\special{pa 1051 952}%
\special{pa 1000 1000}%
\special{fp}%
% VECTOR 2 2 3 0 Black White
% 2 2600 600 3400 1000
% 
\special{pn 8}%
\special{pa 2600 600}%
\special{pa 3400 1000}%
\special{dt 0.045}%
\special{sh 1}%
\special{pa 3400 1000}%
\special{pa 3349 952}%
\special{pa 3352 976}%
\special{pa 3331 988}%
\special{pa 3400 1000}%
\special{fp}%
% VECTOR 2 0 3 0 Black White
% 2 1200 2600 3200 1400
% 
\special{pn 8}%
\special{pa 1200 2600}%
\special{pa 3200 1400}%
\special{fp}%
\special{sh 1}%
\special{pa 3200 1400}%
\special{pa 3133 1417}%
\special{pa 3154 1427}%
\special{pa 3153 1451}%
\special{pa 3200 1400}%
\special{fp}%
% VECTOR 2 0 3 0 Black White
% 2 3200 2600 1200 1400
% 
\special{pn 8}%
\special{pa 3200 2600}%
\special{pa 1200 1400}%
\special{fp}%
\special{sh 1}%
\special{pa 1200 1400}%
\special{pa 1247 1451}%
\special{pa 1246 1427}%
\special{pa 1267 1417}%
\special{pa 1200 1400}%
\special{fp}%
% LINE 2 2 3 0 Black White
% 2 2400 400 4000 400
% 
\special{pn 8}%
\special{pa 2400 400}%
\special{pa 4000 400}%
\special{dt 0.045}%
% LINE 2 2 3 0 Black White
% 2 4000 400 4000 3400
% 
\special{pn 8}%
\special{pa 4000 400}%
\special{pa 4000 3400}%
\special{dt 0.045}%
% VECTOR 2 2 3 0 Black White
% 2 4000 3400 2400 3400
% 
\special{pn 8}%
\special{pa 4000 3400}%
\special{pa 2400 3400}%
\special{dt 0.045}%
\special{sh 1}%
\special{pa 2400 3400}%
\special{pa 2467 3420}%
\special{pa 2453 3400}%
\special{pa 2467 3380}%
\special{pa 2400 3400}%
\special{fp}%
% STR 2 0 3 0 Black White
% 4 2600 2900 2600 3000 2 0 0 0
% [1]
\put(26.0000,-30.0000){\makebox(0,0)[lb]{[1]}}%
% STR 2 0 3 0 Black White
% 4 1600 2900 1600 3000 2 0 0 0
% [2]
\put(16.0000,-30.0000){\makebox(0,0)[lb]{[2]}}%
% STR 2 0 3 0 Black White
% 4 2600 2100 2600 2200 2 0 0 0
% [3]
\put(26.0000,-22.0000){\makebox(0,0)[lb]{[3]}}%
% STR 2 0 3 0 Black White
% 4 2800 1500 2800 1600 3 0 0 0
% [3]
\put(28.0000,-16.0000){\makebox(0,0)[rb]{[3]}}%
% STR 2 0 3 0 Black White
% 4 3600 1900 3600 2000 2 0 0 0
% [2]
\put(36.0000,-20.0000){\makebox(0,0)[lb]{[2]}}%
% STR 2 0 3 0 Black White
% 4 800 1900 800 2000 3 0 0 0
% [1]
\put(8.0000,-20.0000){\makebox(0,0)[rb]{[1]}}%
% STR 2 0 3 0 Black White
% 4 3200 700 3200 800 2 0 0 0
% [1]
\put(32.0000,-8.0000){\makebox(0,0)[lb]{[1]}}%
% STR 2 0 3 0 Black White
% 4 1200 700 1200 800 3 0 0 0
% [2]
\put(12.0000,-8.0000){\makebox(0,0)[rb]{[2]}}%
% STR 2 0 3 0 Black White
% 4 4100 1900 4100 2000 2 0 0 0
% [3]
\put(41.0000,-20.0000){\makebox(0,0)[lb]{[3]}}%
% VECTOR 2 0 3 0 Black White
% 2 800 3800 1200 3800
% 
\special{pn 8}%
\special{pa 800 3800}%
\special{pa 1200 3800}%
\special{fp}%
\special{sh 1}%
\special{pa 1200 3800}%
\special{pa 1133 3780}%
\special{pa 1147 3800}%
\special{pa 1133 3820}%
\special{pa 1200 3800}%
\special{fp}%
% STR 2 0 3 0 Black White
% 4 1800 3700 1800 3800 5 0 0 0
% Bruhat edge
\put(18.0000,-38.0000){\makebox(0,0){Bruhat edge}}%
% VECTOR 2 2 3 0 Black White
% 2 2600 3800 3000 3800
% 
\special{pn 8}%
\special{pa 2600 3800}%
\special{pa 3000 3800}%
\special{dt 0.045}%
\special{sh 1}%
\special{pa 3000 3800}%
\special{pa 2933 3780}%
\special{pa 2947 3800}%
\special{pa 2933 3820}%
\special{pa 3000 3800}%
\special{fp}%
% STR 2 0 3 0 Black White
% 4 3650 3700 3650 3800 5 0 0 0
% quantum edge
\put(36.5000,-38.0000){\makebox(0,0){quantum edge}}%
% STR 2 0 3 0 Black White
% 4 1200 2900 1200 3000 4 0 0 0
% [2]
\put(12.0000,-30.0000){\makebox(0,0)[rt]{[2]}}%
% STR 2 0 3 0 Black White
% 4 3200 2900 3200 3000 1 0 0 0
% [1]
\put(32.0000,-30.0000){\makebox(0,0)[lt]{[1]}}%
% STR 2 0 3 0 Black White
% 4 3200 1900 3200 2000 3 0 0 0
% [2]
\put(32.0000,-20.0000){\makebox(0,0)[rb]{[2]}}%
% STR 2 0 3 0 Black White
% 4 1200 1900 1200 2000 2 0 0 0
% [1]
\put(12.0000,-20.0000){\makebox(0,0)[lb]{[1]}}%
% STR 2 0 3 0 Black White
% 4 1800 700 1800 800 1 0 0 0
% [2]
\put(18.0000,-8.0000){\makebox(0,0)[lt]{[2]}}%
% STR 2 0 3 0 Black White
% 4 2600 700 2600 800 4 0 0 0
% [1]
\put(26.0000,-8.0000){\makebox(0,0)[rt]{[1]}}%
\end{picture}}%

\end{center}
From this, we see that the directed edges 
$r_{1} \stackrel{\theta}{\longrightarrow} r_{2}r_{1}$, 
$w_{0} \stackrel{\theta}{\longrightarrow} e$, and 
$r_{2} \stackrel{\theta}{\longrightarrow} r_{1}r_{2}$ are $(1/3)$-paths, and hence $(2/3)$-paths.  
Also, we see that the directed edges 
$e \stackrel{\alpha_1}{\longrightarrow} r_{1}$, 
$r_{1}r_{2} \stackrel{\alpha_1}{\longrightarrow} w_{0}$, and 
$r_{2}r_{1} \stackrel{\alpha_1}{\longrightarrow} r_{2}$ are $(1/2)$-paths. 
\end{ex}
%
%%%%%%%%%%%%%%%
%%% dfn:qLS %%%
%%%%%%%%%%%%%%%
%
\begin{dfn} \label{dfn:qLS}
Let us denote by $\ti{\BB}(\lambda)_{\cl}$ (resp., $\ha{\BB}(\lambda)_{\cl}$) 
the set of all pairs $\eta=(\ud{x}\,;\,\ud{\sigma})$ of 
a sequence $\ud{x}\,:\,x_{1},\,x_{2},\,\dots,\,x_{s}$ of 
elements in $W_{0}^{J}$, with $x_{k} \ne x_{k+1}$ 
for $1 \le k \le s-1$, and a sequence 
$\ud{\sigma}\,:\,
 0=\sigma_{0} < \sigma_{1} < \cdots < \sigma_{s}=1$ of rational numbers
 satisfying the condition that 
there exists a directed $\sigma_{k}$-path 
(resp., directed $\sigma_{k}$-path of length $\len{x_{k}}{x_{k+1}}$) 
from $x_{k+1}$ to $x_{k}$ for each $1 \le k \le s-1$. We call an element of 
$\ti{\BB}(\lambda)_{\cl}$ a quantum Lakshmibai-Seshadri path 
of shape $\lambda$. 
\end{dfn}

\begin{ex} \label{ex3}
Keep the notation and setting of Example~\ref{ex2}. We can check that
\begin{align*}
\eta_1 & = (r_{2},\,r_{2}r_{1},\,r_{1}\,;\,0,\,1/2,\,2/3,\,1), \\
\eta_2 & = (r_{1},\,e,\,w_{0}\,;\,0,\,1/2,\,2/3,\,1), \\
\eta_3 & = (e,\,w_{0},\,r_{1}r_{2}\,;\,0,\,1/3,\,1/2,\,1)
\end{align*}
are quantum LS paths of shape $\lambda$. 
\end{ex}

Let $\eta=(x_{1},\,x_{2},\,\dots,\,x_{s}\,;\,
\sigma_{0},\,\sigma_{1},\,\dots,\,\sigma_{s})$ 
be a rational path, that is, 
a pair of a sequence $x_{1},\,x_{2},\,\dots,\,x_{s}$ 
of elements in $W_{0}^{J}$, with $x_{k} \ne x_{k+1}$ 
for $1 \le k \le s-1$, and a sequence 
$0=\sigma_{0} < \sigma_{1} < \cdots < \sigma_{s}=1$ of 
rational numbers. % such as a quantum LS path of shape $\lambda$. 
We identify $\eta$ with the following piecewise-linear, 
continuous map 
$\eta:[0,1] \rightarrow \BR \otimes_{\BZ} P_{\cl}$ 
(cf. \eqref{eq:path}): 
%
%%%%%%%%%%%%%%%%%%%%%%%
%%%%% eq:QBG-path %%%%%
%%%%%%%%%%%%%%%%%%%%%%%
%
\begin{equation} \label{eq:QBG-path}
\eta(t)=\sum_{l=1}^{k-1}
(\sigma_{l}-\sigma_{l-1})x_{l}\Lambda+
(t-\sigma_{k-1})x_{k}\Lambda \quad 
\text{for $\sigma_{k-1} \le t \le \sigma_{k}$, $1 \le k \le s$};
\end{equation}
here we note that the map $W_{0}^{J} \rightarrow W_{0}\Lambda$, 
$w \mapsto w\Lambda$, is bijective.

We know the following from 
\cite[Theorem 4.1.1]{LNSSS3} (see also \cite{LNSSS2}). 
%
%%%%%%%%%%%%%%%
%%% thm:QLS %%%
%%%%%%%%%%%%%%%
%
\begin{thm} \label{thm:QLS}
With the notation and setting above, we have
\begin{equation*}
\ha{\BB}(\lambda)_{\cl}=
\ti{\BB}(\lambda)_{\cl}=\BB(\lambda)_{\cl}.
\end{equation*}
\end{thm}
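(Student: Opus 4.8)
The plan is to establish the two equalities $\ha{\BB}(\lambda)_{\cl} = \ti{\BB}(\lambda)_{\cl}$ and $\ti{\BB}(\lambda)_{\cl} = \BB(\lambda)_{\cl}$ essentially in one stroke, by exhibiting an explicit dictionary between $\cl$-projected LS paths and rational paths in $W_0^J$, and then showing that under this dictionary the defining combinatorial condition on one side matches the condition on the other. First I would record the basic correspondence: given a $\cl$-projected LS path $\eta = \cl(\pi)$ with $\pi = (\nu_1 > \nu_2 > \cdots > \nu_s\,;\,\ud{\sigma}) \in \BB(\lambda)$, each $\cl(\nu_k) \in W_0\Lambda$ corresponds to a unique $x_k \in W_0^J$ via the bijection $W_0^J \to W_0\Lambda$, $w \mapsto w\Lambda$ (recall $J = \{j \in I_0 \mid \pair{\Lambda}{\alpha_j^\vee} = 0\}$ is exactly the stabilizer data). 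Conversely, any rational path $(\ud{x}\,;\,\ud{\sigma})$ produces a piecewise-linear map into $\BR \otimes_\BZ P_{\cl}$ via \eqref{eq:QBG-path}. So the content is entirely in the $\sigma_k$-chain conditions.

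The heart of the argument is the translation between $\sigma$-chains in $W\lambda$ (Definition~\ref{dfn:achain}) and directed $\sigma$-paths in the parabolic quantum Bruhat graph (Definition~\ref{dfn:QBG-achain}). Here I would invoke Proposition~\ref{prop:QB} together with Remark~\ref{rem:QB}: part (2) of the proposition converts a cover $\mu \gtrdot \nu$ in $W\lambda$ into an edge $\mcr{wr_\beta} \stackrel{\beta}{\leftarrow} w$ in the parabolic QBG (Bruhat or quantum according to the type of the root $\xi$), and part (1) goes the other way, lifting an edge to a cover for \emph{every} $\nu \in W\lambda$ with $\cl(\nu) = w\Lambda$. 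The crucial numerical compatibility is $\sigma\pair{\nu}{\xi^\vee} = \sigma\pair{\Lambda}{\beta^\vee}$, noted in Remark~\ref{rem:QB}, which shows the integrality condition $\sigma_k \pair{\nu_{k-1}}{\xi_k^\vee} \in \BZ_{<0}$ defining a $\sigma_k$-chain matches $\sigma_k \pair{\Lambda}{\beta_k^\vee} \in \BZ$ defining a directed $\sigma_k$-path, modulo checking the sign/negativity always works out (which follows since $r_\xi \nu \gtrdot \nu$ forces $\pair{\nu}{\xi^\vee} < 0$). Concatenating these local correspondences along a whole chain gives: a $\sigma_k$-chain for $(\nu_k, \nu_{k+1})$ in $W\lambda$ exists $\iff$ a directed $\sigma_k$-path from $x_{k+1}$ to $x_k$ exists. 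This yields $\ti{\BB}(\lambda)_{\cl} = \BB(\lambda)_{\cl}$ once one also checks that the ``$x_k \ne x_{k+1}$'' condition corresponds to ``$\nu_k > \nu_{k+1}$'' (distinctness of projections), using that $\nu_k > \nu_{k+1}$ together with a $\sigma_k$-chain forces $\cl(\nu_k) \ne \cl(\nu_{k+1})$, since $\nu_k - \nu_{k+1} \in \BZ_{>0}$-combination of simple roots with a genuine $\Delta_0^+$ or $\delta$-shifted component present (Remarks~\ref{rem:LP}, \ref{rem:cover}).

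For the inclusion $\ha{\BB}(\lambda)_{\cl} \subseteq \ti{\BB}(\lambda)_{\cl}$ we get nothing to prove, since a directed $\sigma_k$-path of minimal length is in particular a directed $\sigma_k$-path. The reverse inclusion $\ti{\BB}(\lambda)_{\cl} \subseteq \ha{\BB}(\lambda)_{\cl}$ is the step I expect to be the main obstacle: given $\eta \in \ti{\BB}(\lambda)_{\cl}$, we must produce, for each $k$, a directed $\sigma_k$-path from $x_{k+1}$ to $x_k$ that is actually \emph{shortest} (length $\len{x_k}{x_{k+1}}$). The natural approach is to appeal to a ``deletion/shortening'' property of the parabolic quantum Bruhat graph — namely, that any directed path can be replaced by a shortest one with the same endpoints while preserving the relevant integrality constraint $\sigma\pair{\Lambda}{\beta^\vee} \in \BZ$ along each edge (and ideally preserving $\wt(\bd)$ modulo the lattice data that is invisible after $\cl$). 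I would cite the structural results on the parabolic QBG from \cite{LNSSS1} — in particular the ``diamond lemmas'' / shellability-type statements controlling shortest paths — to reduce an arbitrary $\sigma_k$-path to a shortest one. This is precisely where the real combinatorial work lies, and where the paper presumably leans on the deeper content of \cite{LNSSS1}; the rest is bookkeeping with the dictionary set up above. Finally, I would double-check the edge cases $s = 1$ (straight-line paths $\eta_\mu$ for $\mu \in W_0\Lambda$, which lie in all three sets by Remarks~\ref{rem:str01}, \ref{rem:str02}) to confirm the correspondence is a genuine bijection on the nose.
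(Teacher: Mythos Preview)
The paper does not prove Theorem~\ref{thm:QLS}: it is simply quoted from \cite[Theorem~4.1.1]{LNSSS3} (with a pointer to \cite{LNSSS2}), so there is no in-paper argument to compare your proposal against. Your outline is a plausible sketch of how such a proof could be organized, and the local dictionary via Proposition~\ref{prop:QB} and Remark~\ref{rem:QB} is indeed the correct mechanism at the level of single covers versus single edges.

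That said, one concrete step in your proposal is wrong. You assert that for an LS path $\pi=(\nu_{1} > \cdots > \nu_{s}\,;\,\ud{\sigma})$ one always has $\cl(\nu_{k}) \ne \cl(\nu_{k+1})$, invoking Remarks~\ref{rem:LP} and~\ref{rem:cover}. This fails: the element $\pi_{\lambda}^{C}$ in \eqref{eq:NS4} (appearing in the proof of Lemma~\ref{lem:C}) is an LS path all of whose directions $\lambda-N_{1}\delta,\dots,\lambda-N_{s-1}\delta,\lambda$ project to the same element $\cl(\lambda)$. Each individual cover $\mu \gtrdot \nu$ does change the $\cl$-projection (the relevant $\xi$ is never a multiple of $\delta$), but a chain of covers can cycle back to the same point of $W_{0}\Lambda$. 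Consequently the passage $\BB(\lambda) \to \BB(\lambda)_{\cl}$ is not ``apply $\cl$ to each $\nu_{k}$ and keep $\ud{\sigma}$'': one must merge consecutive segments with equal projection and drop the corresponding $\sigma_{k}$'s. This is repairable bookkeeping, but the justification you give is incorrect and the remarks you cite do not support it. As for the hard inclusion $\ti{\BB}(\lambda)_{\cl} \subseteq \ha{\BB}(\lambda)_{\cl}$, you correctly flag that replacing an arbitrary directed $\sigma$-path by a shortest one while preserving the integrality condition is the substantive step; this is exactly the content deferred to \cite{LNSSS3} (using the structure theory of the parabolic quantum Bruhat graph from \cite{LNSSS1}), and your sketch does not supply it.
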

%
%=========================%
%     START SECTION 04    %
%=========================%
%
\section{Main result.}
\label{sec:main}

%==============================%
%     START SUBSECTION 0401    %
%==============================%
%
\subsection{Description of the degree function in terms of 
the parabolic quantum Bruhat graph.}
\label{subsec:main}

As in \S\ref{subsec:QLS}, 
we fix a level-zero dominant integral weight
$\lambda \in \sum_{j \in I_{0}} \BZ_{\ge 0} \vpi_{j}$, 
and set $J=\bigl\{j \in I_{0} \mid \pair{\Lambda}{\alpha^{\vee}_{j}}=0 \bigr\}$, 
where $\Lambda:=\cl(\lambda)$. 

Let $\eta \in \BB(\lambda)_{\cl}$. 
By Theorem~\ref{thm:QLS}, we can write 
$\eta$ in the form: 
\begin{equation*}
\eta = (x_{1},\,x_{2},\,\dots,\,x_{s}\,;\,
\sigma_{0},\,\sigma_{1},\,\dots,\,\sigma_{s}) 
\in \ha{\BB}(\lambda)_{\cl}.
\end{equation*}
For each $1 \le p \le s-1$, let $\bd_{p}$ denote a directed 
$\sigma_{p}$-path from $x_{p+1}$ to $x_{p}$ 
of length $\len{x_{p}}{x_{p+1}}$; observe that 
the value $\pair{\Lambda}{\wt(\bd_{p})}$ 
does not depend on the choice of 
such a directed $\sigma_{p}$-path $\bd_{p}$. 
Indeed, if $\bd_{p}'$ is another 
directed $\sigma_{p}$-path from $x_{p+1}$ to $x_{p}$ 
of length $\len{x_{p}}{x_{p+1}}$, 
then it follows from \cite[Proposition~8.1]{LNSSS1} that 
$\wt(\bd_{p})-\wt(\bd_{p}') \in Q_{J}^{\vee}:=
\bigoplus_{j \in J}\BZ \alpha_{j}^{\vee}$. Since 
$J=\bigl\{j \in I_{0} \mid 
\pair{\Lambda}{\alpha^{\vee}_{j}}=0 \bigr\}$ by the definition, 
we have
\begin{equation*}
\pair{\Lambda}{\wt(\bd_{p})-\wt(\bd_{p}')}=0, \quad
\text{and hence} \quad \pair{\Lambda}{\wt(\bd_{p})}=
\pair{\Lambda}{\wt(\bd_{p}')}.
\end{equation*}
Now, we define
%
%%%%%%%%%%%%%%%
%%% eq:tinu %%%
%%%%%%%%%%%%%%%
%
\begin{equation} \label{eq:tinu}
\ti{\nu}_{1}:=x_{1}\lambda, \qquad 
\ti{\nu}_{p}:=x_{p}\lambda+
 \left(\sum_{u=1}^{p-1} \Bpair{\Lambda}{\wt(\bd_{u})}\right)\delta
\quad \text{for $2 \le p \le s$},
\end{equation}
and set 
\begin{equation*}
\ti{\pi}_{\eta}:=
  (\ti{\nu}_{1},\,\ti{\nu}_{2},\,\dots,\,\ti{\nu}_{s}\,;\,
   \sigma_{0},\,\sigma_{1},\,\dots,\,\sigma_{s}). 
\end{equation*}
%
% if $s=1$, then we define 
% $\nu_{1}:=x_{1}\lambda$, and set
% $\ti{\pi}_{\eta}=(\nu_{1}\,;\,0,\,1)$.
%
The following is the main result of this paper; 
its proof will be given in the next subsection. 
%
%%%%%%%%%%%%%%%%
%%% thm:main %%%
%%%%%%%%%%%%%%%%
%
\begin{thm} \label{thm:main}
Keep the notation above. Then, the element 
$\ti{\pi}_{\eta}$ defined above 
is identical to the element $\pi_{\eta} \in \BB_{0}(\lambda) \subset \BB(\lambda)$ 
in Proposition~\ref{prop:degree}. Moreover, we have 
%
%%%%%%%%%%%%%%
%%% eq:deg %%%
%%%%%%%%%%%%%%
%
\begin{equation} \label{eq:deg}
\Deg(\eta)=-\sum_{p=1}^{s-1} 
 (1-\sigma_{p})\Bpair{\Lambda}{\wt(\bd_{p})}. 
\end{equation}
\end{thm}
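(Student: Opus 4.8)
The plan is to verify that $\ti{\pi}_{\eta}$ satisfies the three defining properties of $\pi_{\eta}$ listed in Proposition~\ref{prop:degree}, and then to read off \eqref{eq:deg} by evaluating $\ti{\pi}_{\eta}(1)$ and comparing with the definition of $\Deg$. First I would check that $\ti{\pi}_{\eta}$ is a genuine element of $\BB(\lambda)$: applying $\cl$ to \eqref{eq:tinu} kills the $\delta$-terms and gives $\cl(\ti{\nu}_p)=x_p\Lambda$, so by \eqref{eq:QBG-path} we get $\cl(\ti{\pi}_{\eta})=\eta$, which is property (1). To see that $\ti{\pi}_{\eta}\in\BB(\lambda)$, I would use Proposition~\ref{prop:QB}\,(1) together with Remark~\ref{rem:QB}: each edge $w_{k-1}\stackrel{\beta_k}{\leftarrow}w_k$ of the directed $\sigma_p$-path $\bd_p$ lifts, via the prescription $\xi=w\beta$ (Bruhat) or $\xi=w\beta+\delta$ (quantum), to a cover $r_\xi\nu\gtrdot\nu$ in $W\lambda$, and the $\sigma_p$-path condition $\sigma_p\pair{\Lambda}{\beta_k^\vee}\in\BZ$ translates into the chain condition $\sigma_p\pair{\nu}{\xi^\vee}\in\BZ_{<0}$. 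The key point is that following such a lift of $\bd_p$ from the lift $x_{p+1}\lambda+(\cdots)\delta$ of $x_{p+1}\Lambda$ produces exactly $x_p\lambda+(\cdots)\delta$ shifted by $\pair{\Lambda}{\wt(\bd_p)}\delta$ — this is because each quantum edge contributes a $+\delta$ to $\xi$, and tracking the constant term of $\xi$ along the path accounts for precisely $\sum \beta_k^\vee$ over quantum edges, evaluated against $\Lambda$. Thus $\ti{\nu}_p\gtrdot\cdots\gtrdot\ti{\nu}_{p+1}$ with the right chain, giving a $\sigma_p$-chain for $(\ti{\nu}_p,\ti{\nu}_{p+1})$.

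Next I would establish properties (2) and (3). Property (3) is immediate from \eqref{eq:tinu}: $\ti{\nu}_1=x_1\lambda\in W_0\lambda\subset\lambda-Q_0^+$ (using Remark~\ref{rem:theta}\,(2)). For property (2), that $\ti{\pi}_{\eta}$ lies in the component $\BB_0(\lambda)$, I would argue via the length condition: since we chose each $\bd_p$ to be a \emph{shortest} directed path, of length $\len{x_p}{x_{p+1}}$, the lift is a $\sigma_p$-chain of maximal possible length, i.e. $\dist(\ti{\nu}_p,\ti{\nu}_{p+1})$ is as small as it can be; combined with the precise value of the $\delta$-shift, this pins down $\ti{\pi}_{\eta}$ as the ``lowest'' lift. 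More concretely, I would use Lemma~\ref{lem:C}: any $\pi\in\BB(\lambda)$ with $\cl(\pi)=\eta$ satisfying conditions (1) and (3) of Proposition~\ref{prop:degree} lies in some component $C$ and equals $\pi^C_\eta$, with $\pi^C_\eta(1)=\pi_\eta(1)+L\delta$ for $L\ge 0$, and $C=\BB_0(\lambda)$ iff $L=0$. So it suffices to show $\ti{\pi}_{\eta}(1)$ is minimal in its $\delta$-coordinate among all such lifts, equivalently that the $\delta$-shifts $\sum_{u<p}\pair{\Lambda}{\wt(\bd_u)}$ cannot be decreased; this is exactly where the shortest-path hypothesis and \cite[Proposition~8.1]{LNSSS1} (independence of $\pair{\Lambda}{\wt(\bd_p)}$ on the choice of shortest path) enter, since a non-shortest path would use an extra quantum edge and hence a strictly larger $\delta$-shift.

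Finally, granting $\ti{\pi}_{\eta}=\pi_{\eta}$, I would compute $\pi_\eta(1)=\ti{\pi}_\eta(1)$ using \eqref{eq:path}:
\begin{equation*}
\ti{\pi}_\eta(1)=\sum_{p=1}^{s}(\sigma_p-\sigma_{p-1})\ti{\nu}_p
=\sum_{p=1}^{s}(\sigma_p-\sigma_{p-1})x_p\lambda+\left(\sum_{p=2}^{s}(\sigma_p-\sigma_{p-1})\sum_{u=1}^{p-1}\pair{\Lambda}{\wt(\bd_u)}\right)\delta.
\end{equation*}
The first sum lies in $\lambda-Q_0^+$ and has no $\delta$-component, so $\Deg(\eta)=-K$ where $K$ is the coefficient of $\delta$ above. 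Swapping the order of summation, $\sum_{p=2}^{s}(\sigma_p-\sigma_{p-1})\sum_{u=1}^{p-1}\pair{\Lambda}{\wt(\bd_u)}=\sum_{u=1}^{s-1}\pair{\Lambda}{\wt(\bd_u)}\sum_{p=u+1}^{s}(\sigma_p-\sigma_{p-1})=\sum_{u=1}^{s-1}(1-\sigma_u)\pair{\Lambda}{\wt(\bd_u)}$, since $\sum_{p=u+1}^s(\sigma_p-\sigma_{p-1})=\sigma_s-\sigma_u=1-\sigma_u$. Hence $\Deg(\eta)=-\sum_{p=1}^{s-1}(1-\sigma_p)\pair{\Lambda}{\wt(\bd_p)}$, which is \eqref{eq:deg}. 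The main obstacle I anticipate is the bookkeeping in the first paragraph: carefully tracking how the constant ($\delta$-) part of the lifted root $\xi$ accumulates along a directed path so that the total shift is exactly $\pair{\Lambda}{\wt(\bd_p)}\delta$, and verifying that the shortest-path choice is precisely what forces membership in $\BB_0(\lambda)$ rather than a higher component — this is where the interplay between the combinatorics of quantum edges and the geometry of the connected components of $\BB(\lambda)$ must be handled with care.
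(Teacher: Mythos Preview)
Your overall strategy is exactly the paper's: lift each $\bd_p$ to a $\sigma_p$-chain via Proposition~\ref{prop:QB}\,(1) and Remark~\ref{rem:QB} to show $\ti{\pi}_\eta\in\BB(\lambda)$, check conditions (1) and (3) of Proposition~\ref{prop:degree} directly, reduce condition (2) to $L=0$ via Lemma~\ref{lem:C}, and then extract \eqref{eq:deg} by the Abel-summation you wrote (which is identical to the paper's final computation).

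The one genuine gap is your argument for $L=0$. You phrase it as ``the $\delta$-shifts cannot be decreased'' and justify this with ``a non-shortest path would use an extra quantum edge,'' citing \cite[Proposition~8.1]{LNSSS1} only for the \emph{independence} of $\pair{\Lambda}{\wt(\bd_p)}$. Two things are missing. First, what you actually need from \cite[Proposition~8.1]{LNSSS1} is the \emph{inequality}: for \emph{any} directed path $\bd$ from $x_{p+1}$ to $x_p$ (shortest or not), $\pair{\Lambda}{\wt(\bd)}\ge\pair{\Lambda}{\wt(\bd_p)}$. Second, and more seriously, you never explain how to compare $\ti{\pi}_\eta(1)$ with $\pi_\eta(1)$: the element $\pi_\eta$ is not a priori built from directed paths in the quantum Bruhat graph, and it may have strictly more segments than $\eta$ (a sequence $\tau_0<\cdots<\tau_b$ with $b>s$ refining the $\sigma$'s). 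The paper bridges this by writing out $\pi_\eta$ explicitly, using the monotonicity $K_1\le K_2\le\cdots\le K_b$ of the $\delta$-coefficients (Remark~\ref{rem:LP}) to bound $-\Deg(\eta)$ below by $\sum_p(\sigma_{p+1}-\sigma_p)K_{c_p+1}$, and then applying Proposition~\ref{prop:QB}\,(2) --- the \emph{reverse} translation, from covers in $W\lambda$ back to edges in the parabolic quantum Bruhat graph --- to convert each chain $\nu_{c_p}\gtrdot\cdots\gtrdot\nu_{c_p+1}$ into a (generally non-shortest) directed path $\bd$, whence the inequality above gives $K_{c_p+1}\ge\sum_{u\le p}\pair{\Lambda}{\wt(\bd_u)}$ by induction on $p$. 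This use of Proposition~\ref{prop:QB}\,(2) is the missing step in your sketch; without it there is no link between the abstractly-defined $\pi_\eta$ and the combinatorics of shortest paths.
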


\begin{rem}
The formula \eqref{eq:deg} is identical to the one obtained 
in \cite[Theorem~4.5]{LNSSS2}, but the proof given there is 
completely different from the proof given in the next subsection.
\end{rem}

\begin{ex} \label{ex4}
Keep the notation and setting of Examples~\ref{ex2} and \ref{ex3}. 
Let us compute $\Deg(\eta_{1})$. It is obvious that 
$r_{2} \stackrel{\alpha_1}{\longleftarrow} r_{2}r_{1}$ 
(resp., $r_{2}r_{1} \stackrel{\theta}{\longleftarrow} r_{1}$) 
is a shortest directed path from $r_{2}r_{1}$ to $r_{2}$ 
(resp., from $r_{1}$ to $r_{2}r_{1}$). Because 
$r_{2} \stackrel{\alpha_1}{\longleftarrow} r_{2}r_{1}$ 
(resp., $r_{2}r_{1} \stackrel{\theta}{\longleftarrow} r_{1}$) is a quantum edge 
(resp., Bruhat edge), it follows from the definition \eqref{eq:wtdp} of 
the weight of a directed path that 
\begin{equation*}
\wt (r_{2} \stackrel{\alpha_1}{\longleftarrow} r_{2}r_{1}) = \alpha_{1}^{\vee}
 \quad \text{and} \quad
\wt (r_{2}r_{1} \stackrel{\theta}{\longleftarrow} r_{1}) = 0.
\end{equation*}
Hence, by Theorem~\ref{thm:main}, we have
\begin{align*}
\Deg(\eta_{1}) & = 
 -\left(1-\frac{1}{2}\right) \Bpair{\Lambda}{ \wt (r_{2} \stackrel{\alpha_1}{\longleftarrow} r_{2}r_{1}) }
 -\left(1-\frac{2}{3}\right) \Bpair{\Lambda}{ \wt (r_{2}r_{1} \stackrel{\theta}{\longleftarrow} r_{1}) } \\[1.5mm]
& = 
 -\left(1-\frac{1}{2}\right) \underbrace{\Bpair{\Lambda}{\alpha_{1}^{\vee}}}_{=2}
 -\left(1-\frac{2}{3}\right) \pair{\Lambda}{0} = -1. 
\end{align*}
Similarly, we have
\begin{equation*}
\Deg(\eta_{2}) = 
 -\left(1-\frac{1}{2}\right) 
 \Bpair{\Lambda}{ \underbrace{\wt(r_{1} \stackrel{\alpha_1}{\longleftarrow} e)}_{=0} } 
 -\left(1-\frac{2}{3}\right) 
 \Bpair{\Lambda}{ \underbrace{\wt(e \stackrel{\theta}{\longleftarrow} w_0)}_{=\theta^{\vee}} } = -1, 
\end{equation*}
\begin{equation*}
\Deg(\eta_{3}) = 
 -\left(1-\frac{1}{3}\right) 
 \Bpair{\Lambda}{ \underbrace{\wt(e \stackrel{\theta}{\longleftarrow} w_0)}_{=\theta^{\vee}} } 
 -\left(1-\frac{1}{2}\right) 
 \Bpair{\Lambda}{ \underbrace{\wt(w_0 \stackrel{\alpha_1}{\longleftarrow} r_1r_2)}_{=0} } = -2.
\end{equation*}
\end{ex}

%==============================%
%     START SUBSECTION 0402    %
%==============================%
%
\subsection{Proof of Theorem~\ref{thm:main}.}
\label{subsec:prf-main}

Keep the notation of the previous subsection. 
First we claim that $\ti{\pi}_{\eta} \in \BB(\lambda)$. 
We will show by induction on $p$ that 
$\ti{\nu}_{p} \in W\lambda$ for all $1 \le p \le s$. 
If $p=1$, then the assertion is obvious 
from the definition: $\ti{\nu}_{1}=x_{1}\lambda$. 
Assume now that $s-1 \ge p \ge 1$, and 
$\bd_{p}$ is of the form
\begin{equation*}
\bd_{p} : 
x_{p}=w_{0} \stackrel{\beta_{1}}{\longleftarrow} 
w_{1} \stackrel{\beta_{2}}{\longleftarrow} 
w_{2} \stackrel{\beta_{3}}{\longleftarrow} \cdots 
\stackrel{\beta_{n}}{\longleftarrow} w_{n}=x_{p+1}. 
\end{equation*}
For each $1 \le k \le n$, we define $\xi_{k} \in \prr$ as follows 
(see Proposition~\ref{prop:QB}): 
%
%%%%%%%%%%%%%
%%% eq:xi %%%
%%%%%%%%%%%%%
%
\begin{equation} \label{eq:xi}
\xi_{k}=
\begin{cases}
w_{k}\beta_{k} 
 & \text{if $w_{k-1}=\mcr{w_{k}r_{\beta_{k}}} 
   \stackrel{\beta_{k}}{\longleftarrow} w_{k}$ is a Bruhat edge}, \\[1.5mm]
w_{k}\beta_{k}+\delta
 & \text{if $w_{k-1}=\mcr{w_{k}r_{\beta_{k}}} 
   \stackrel{\beta_{k}}{\longleftarrow} w_{k}$ is a quantum edge}. 
\end{cases}
\end{equation}
Then, for $0 \le k \le n$, we obtain
%
%%%%%%%%%%%%%%%
%%% eq:timu %%%
%%%%%%%%%%%%%%%
%
\begin{equation} \label{eq:timu}
\ti{\mu}_{k}:=
r_{\xi_{k}} \cdots r_{\xi_{2}}r_{\xi_{1}}\ti{\nu}_{p} =
w_{k}\lambda+
 \left(\sum_{u=1}^{p-1} \Bpair{\Lambda}{\wt(\bd_{u})}\right)\delta+
 \left(\sum_{l \in [1,\,k]_{\Q}} \Bpair{\Lambda}{\beta_{l}^{\vee}}\right)\delta,
\end{equation}
where $[1,\,k]_{\Q} := \bigl\{ 1 \le l \le k \mid 
\text{$w_{l-1}=\mcr{w_{l}r_{\beta_{l}}} 
   \stackrel{\beta_{l}}{\longleftarrow} w_{l}$ is a quantum edge} \bigr\}$. 
Indeed, this equation follows by induction on $k$. 
If $k=0$, then equation \eqref{eq:timu} is obvious by \eqref{eq:tinu}. 
Assume that $k \ge 1$; by the induction hypothesis, 
%
%%%%%%%%%%%%%%%%
%%% eq:timu1 %%% % hoge
%%%%%%%%%%%%%%%%
%
\begin{equation} \label{eq:timu1}
\ti{\mu}_{k} =
r_{\xi_{k}} \ti{\mu}_{k-1} = 
r_{\xi_{k}} w_{k-1}\lambda+
 \left(\sum_{u=1}^{p-1} \Bpair{\Lambda}{\wt(\bd_{u})}\right)\delta+
 \left(\sum_{l \in [1,\,k-1]_{\Q}} \Bpair{\Lambda}{\beta_{l}^{\vee}}\right)\delta. 
\end{equation}
If $w_{k-1} \stackrel{\beta_{k}}{\longleftarrow} w_{k}$ is a Bruhat edge, 
then we have $[1,\,k]_{\Q}=[1,\,k-1]_{\Q}$. Also, 
since $\xi_{k}=w_{k}\beta_{k}$, it follows that 
\begin{align*}
r_{\xi_{k}} w_{k-1}\lambda 
  & = w_{k}r_{\beta_{k}}w_{k}^{-1} w_{k-1}\lambda 
    = w_{k}r_{\beta_{k}}w_{k}^{-1} \mcr{ w_{k}r_{\beta_{k}} }\lambda 
    = w_{k}r_{\beta_{k}}w_{k}^{-1} w_{k}r_{\beta_{k}} \lambda 
    = w_{k}\lambda. 
\end{align*}
Therefore, the right-hand side 
of equation \eqref{eq:timu1} is identical to
\begin{align*}
w_{k}\lambda+
 \left(\sum_{u=1}^{p-1} \Bpair{\Lambda}{\wt(\bd_{u})}\right)\delta+
 \left(\sum_{l \in [1,\,k]_{\Q}} \Bpair{\Lambda}{\beta_{l}^{\vee}}\right)\delta. 
\end{align*}
If $w_{k-1} \stackrel{\beta_{k}}{\longleftarrow} w_{k}$ is a quantum edge, 
then we have $[1,\,k]_{\Q}=[1,\,k-1]_{\Q} \cup \bigl\{k\bigr\}$. Also, 
since $\xi_{k}=w_{k}\beta_{k} + \delta$, 
it follows that 
\begin{align*}
r_{\xi_{k}} w_{k-1}\lambda 
 & = r_{ w_{k}\beta_{k} +\delta }w_{k-1}\lambda 
    = r_{ w_{k}\beta_{k} } t_{ w_{k}\beta_{k}^{\vee} } w_{k-1}\lambda
    = \underbrace{ r_{w_{k}\beta_{k}}w_{k-1}\lambda }_{%
         \text{$=w_{k}\lambda$ as above} } - 
      \pair{w_{k-1}\Lambda}{ w_{k}\beta_{k}^{\vee} }\delta \\
 & = w_{k}\lambda - \pair{ \mcr{ w_{k}r_{\beta_{k}} }\Lambda }{ w_{k}\beta_{k}^{\vee} } \delta
   = w_{k}\lambda - \pair{ w_{k}r_{\beta_{k}}\Lambda }{ w_{k}\beta_{k}^{\vee} } \delta \\
 & = w_{k}\lambda + \pair{\Lambda}{\beta_{k}^{\vee}} \delta. 
\end{align*}
Therefore, the right-hand side of equation \eqref{eq:timu1} is identical to
\begin{align*}
& w_{k}\lambda+\pair{\Lambda}{\beta_{k}^{\vee}} \delta + 
 \left(\sum_{u=1}^{p-1} \Bpair{\Lambda}{\wt(\bd_{u})}\right)\delta+
 \left(\sum_{l \in [1,\,k-1]_{\Q}} \Bpair{\Lambda}{\beta_{l}^{\vee}}\right)\delta \\[3mm]
& \hspace*{20mm}
 = w_{k}\lambda+
 \left(\sum_{u=1}^{p-1} \Bpair{\Lambda}{\wt(\bd_{u})}\right)\delta+
 \left(\sum_{l \in [1,\,k]_{\Q}} \Bpair{\Lambda}{\beta_{l}^{\vee}}\right)\delta. 
\end{align*}
This proves equation \eqref{eq:timu}. 
In particular, for $k=n$, we obtain
\begin{align*}
\ti{\mu}_{n} 
 & = r_{\xi_{n}} \cdots r_{\xi_{2}}r_{\xi_{1}}\ti{\nu}_{p} 
   = w_{n}\lambda+
 \left(\sum_{u=1}^{p-1} \Bpair{\Lambda}{\wt(\bd_{u})}\right)\delta+
 \left(\sum_{l \in [1,\,n]_{\Q}} \Bpair{\Lambda}{\beta_{l}^{\vee}}\right)\delta \\[3mm]
 & = x_{p+1}\lambda+ 
 \left(\sum_{u=1}^{p-1} \Bpair{\Lambda}{\wt(\bd_{u})}\right)\delta+
 \Bpair{\Lambda}{\wt(\bd_{p})}\delta \\[3mm]
 & = x_{p+1}\lambda+ 
 \left(\sum_{u=1}^{p} \Bpair{\Lambda}{\wt(\bd_{u})}\right)\delta = \ti{\nu}_{p+1}
\end{align*}
by the definition \eqref{eq:tinu} of $\ti{\nu}_{p+1}$. 
Since $\ti{\nu}_{p} \in W\lambda$ by our induction hypothesis, 
we deduce that $\ti{\nu}_{p+1} \in W\lambda$, as desired. Also, 
by Proposition~\ref{prop:QB}\,(1), we see that for $1 \le p \le s-1$, 
\begin{equation*}
\ti{\nu}_{p}=\ti{\mu}_{0} \gtrdot \ti{\mu}_{1} \gtrdot \ti{\mu}_{2} 
\gtrdot \cdots \gtrdot \ti{\mu}_{n}=\ti{\nu}_{p+1},
\end{equation*}
where $\ti{\mu}_{k}=r_{\xi_{k}}\ti{\mu}_{k-1}$ for $1 \le k \le n$ 
by the definitions. Moreover, since $\bd_{p}$ is a directed $\sigma_{p}$-path, 
it follows from Remark~\ref{rem:QB} that 
the sequence above is a $\sigma_{p}$-chain for 
$(\ti{\nu}_{p},\,\ti{\nu}_{p+1})$. 
Thus we conclude that $\ti{\pi}_{\eta} \in \BB(\lambda)$. 

Because $\ti{\pi}_{\eta} \in \BB(\lambda)$ as shown above, 
and because $\cl(\ti{\pi}_{\eta})=\eta$ and 
$\ti{\nu}_{1}=x_{1}\lambda \in W_{0}\lambda \subset \lambda-Q_{0}^{+}$ 
by the definitions, the element $\ti{\pi}_{\eta}$ 
satisfies conditions (1) and (3) of Proposition~\ref{prop:degree}. 
Therefore, we deduce from Lemma~\ref{lem:C} that $\ti{\pi}_{\eta}(1)$
is of the form:
\begin{equation*}
\ti{\pi}_{\eta}(1) = \lambda-\beta+\bigl(-\Deg (\eta)+L\bigr)\delta
\end{equation*}
for some $\beta \in Q_{0}^{+}$ and $L \in \BZ_{\ge 0}$. 
By Lemma~\ref{lem:C}, in order to prove that $\ti{\pi}_{\eta}=\pi_{\eta}$, 
it suffices to show that $L=0$, or equivalently, 
$-\Deg (\eta)+L \le -\Deg(\eta)$ since $L \in \BZ_{\ge 0}$.
By using \eqref{eq:path}, we see 
from the definition of $\ti{\pi}_{\eta}$ that
%
%%%%%%%%%%%%%%%
%%% eq:deg1 %%%
%%%%%%%%%%%%%%%
%
\begin{equation} \label{eq:deg1}
-\Deg (\eta)+L = 
\sum_{p=0}^{s-1} (\sigma_{p+1}-\sigma_{p}) 
\left(\sum_{u=1}^{p} \Bpair{\Lambda}{\wt(\bd_{u})}\right). 
\end{equation}

Now, if we write $\pi_{\eta}$ as
\begin{equation*}
\pi_{\eta}=(\nu_{1},\,\nu_{2},\,\dots,\,\nu_{b}\,;\,
\tau_{0},\,\tau_{1},\,\dots,\,\tau_{b}), 
\end{equation*}
then we have $\nu_{q} \in \lambda-Q_{0}^{+}+K_{q}\delta$ 
for some $K_{q} \in \BZ$, $1 \le q \le b$ (see Remark~\ref{rem:theta}\,(2)); 
observe that $K_{1}=0$ by the definition of $\pi_{\eta}$ 
(see Proposition~\ref{prop:degree}\,(3)), and that 
$0=K_{1} \le K_{2} \le \cdots \le K_{b}$ 
by Remark~\ref{rem:LP}. Since $\cl(\pi_{\eta})=\eta$, 
we deduce that there exist $0 = c_{0} < c_{1} < c_{2} < \cdots < c_{s} = b$ 
such that $\tau_{c_{p}}=\sigma_{p}$ for $0 \le p \le s$, 
and hence $\pi_{\eta}$ can be written as:
\begin{equation*}
\begin{split}
&
(\underbrace{ \nu_{1},\,\dots,\,\nu_{c_{1}} }_{%
  \begin{subarray}{c} \text{mapped to} \\[1mm] \text{$x_{1}\lambda$ by $\cl$} \end{subarray}
 },\ 
\underbrace{ \nu_{c_{1}+1},\,\dots,\,\nu_{c_{2}} }_{%
  \begin{subarray}{c} \text{mapped to} \\[1mm] \text{$x_{2}\lambda$ by $\cl$} \end{subarray}
 },\,\dots,\,
\underbrace{ \nu_{c_{s-1}+1},\,\dots,\,\nu_{c_{s}}=\nu_{b} }_{%
  \text{mapped to $x_{s}\lambda$ by $\cl$}
 } \,;\, \\[3mm]
& \hspace*{20mm}
\underbrace{0=\tau_{0}}_{=\sigma_{0}},\,\tau_{1},\,\dots,\,
\underbrace{\tau_{c_{1}}}_{=\sigma_{1}},\,
\tau_{c_{1}+1},\,\dots,\,
\underbrace{\tau_{c_{2}}}_{=\sigma_{2}},\,\dots,\,
\tau_{c_{s-1}+1},\,\dots,\,
\underbrace{\tau_{c_{s}}=\tau_{b}=1}_{=\sigma_{s}}). 
\end{split}
\end{equation*}
From this, we compute
\begin{align}
-\Deg(\eta) & 
 =\sum_{q=1}^{b}(\tau_{q}-\tau_{q-1})K_{q}
 =\sum_{p=0}^{s-1} \sum_{q=c_{p}+1}^{c_{p+1}} 
  (\tau_{q}-\tau_{q-1})K_{q} \nonumber \\[3mm]
& \ge
 \sum_{p=0}^{s-1} \sum_{q=c_{p}+1}^{c_{p+1}} 
 (\tau_{q}-\tau_{q-1})K_{c_{p}+1} 
 \quad \text{since $K_{q} \ge K_{c_{p}+1}$ 
  for all $c_{p}+1 \le q \le c_{p+1}$} \nonumber \\[3mm]
& 
 = \sum_{p=0}^{s-1} (\tau_{c_{p+1}}-\tau_{c_{p}}) K_{c_{p}+1}
 = \sum_{p=0}^{s-1} (\sigma_{p+1}-\sigma_{p}) K_{c_{p}+1}. \label{eq:deg2}
\end{align}
Therefore, by \eqref{eq:deg1} and \eqref{eq:deg2}, 
in order to show the inequality $-\Deg (\eta)+L \le -\Deg(\eta)$, 
it suffices to show that
%
%%%%%%%%%%%%%%%
%%% eq:deg3 %%%
%%%%%%%%%%%%%%%
%
\begin{equation} \label{eq:deg3}
K_{c_{p}+1} \ge \sum_{u=1}^{p} \Bpair{\Lambda}{\wt(\bd_{u})} \quad 
\text{for all $0 \le p \le s-1$}. 
\end{equation}
We show this inequality by induction on $p$. 
If $p=0$, then the assertion is obvious 
since $K_{c_{p}+1}=K_{1}=0$ as seen above. 
Assume that $s-1 \ge p > 0$. 
Take $\mu_{0},\,\mu_{1},\,\dots,\,\mu_{m} \in W\lambda$ such that 
\begin{equation*}
\nu_{c_{p}} = \mu_{0} \gtrdot \mu_{1} \gtrdot \cdots \gtrdot \mu_{m}=\nu_{c_{p}+1}
\end{equation*}
(for example, take a $\tau_{c_{p}}$-chain for $(\nu_{c_{p}},\,\nu_{c_{p}+1})$), 
and let $\zeta_{k} \in \prr$ be the positive real root such that 
$\mu_{k}=r_{\zeta_{k}}\mu_{k-1}$, $1 \le k \le m$. 
For each $0 \le k \le m$, let $v_{k} \in W_{0}^{J}$ be a unique element 
in $W_{0}^{J}$ such that $\cl(\mu_{k})=v_{k}\Lambda$; 
remark that $v_{0}=x_{p}$ and $v_{m}=x_{p+1}$. 
By repeated application of Proposition~\ref{prop:QB}\,(2), 
we obtain a directed path (not shortest in general)
\begin{equation*}
\bd : 
x_{p}=v_{0} \stackrel{\gamma_{1}}{\longleftarrow} 
v_{1} \stackrel{\gamma_{2}}{\longleftarrow} 
v_{2} \stackrel{\gamma_{3}}{\longleftarrow} \cdots 
\stackrel{\gamma_{m}}{\longleftarrow} v_{m}=x_{p+1}
\end{equation*}
from $x_{p+1}$ to $x_{p}$ in the parabolic quantum Bruhat graph, 
where $\gamma_{k} \in \Delta_{0}^{+} \setminus \Delta_{0,J}^{+}$ 
for $1 \le k \le m$ are defined by
\begin{equation*}
\gamma_{k}:=
 \begin{cases}
 v_{k}^{-1}\zeta_{k} & \text{if $\zeta_{k} \in \Delta_{0}^{+}$}, \\[1.5mm]
 v_{k}^{-1}(\zeta_{k}-\delta)
     & \text{if $\zeta_{k} \in 
     \bigl\{-\gamma+\delta \mid \gamma \in \Delta_{0}^{+}\bigr\}$};
 \end{cases}
\end{equation*}
recall that 
$v_{k-1} \stackrel{\gamma_{k}}{\longleftarrow} v_{k}$ 
is a Bruhat edge if and only if $\zeta_{k} \in \Delta_{0}^{+}$. 
By the same argument as for equation \eqref{eq:timu},  
we can show that for $0 \le k \le m$,
\begin{equation*}
\mu_{k}=
r_{\zeta_{k}} \cdots r_{\zeta_{2}}r_{\zeta_{1}}\nu_{c_{p}}=
v_{k}\lambda+K_{c_{p}}\delta + 
 \left(\sum_{l} \Bpair{\Lambda}{\gamma_{l}^{\vee}}\right)\delta,
\end{equation*}
where the summation above is over all $1 \le l \le k$ 
for which $v_{l-1}=\mcr{v_{l}r_{\gamma_{l}}} 
   \stackrel{\gamma_{l}}{\longleftarrow} v_{l}$ is a quantum edge. 
In particular, for $k=m$, we obtain
\begin{equation*}
\nu_{c_{p}+1} = \mu_{m}= x_{p+1}\lambda+K_{c_{p}}\delta + 
\pair{\Lambda}{\wt (\bd)}\delta, 
\end{equation*}
and hence $K_{c_{p}+1} = K_{c_{p}}+\pair{\Lambda}{\wt (\bd)}$. 
Here we see from \cite[Proposition~8.1]{LNSSS1} that 
$\pair{\Lambda}{\wt (\bd)} \ge \pair{\Lambda}{\wt (\bd_{p})}$. 
Also, by the induction hypothesis (note that $c_{p-1} < c_{p}$), 
\begin{equation*}
K_{c_{p}} \ge K_{c_{p-1}+1} \ge 
\sum_{u=1}^{p-1} \Bpair{\Lambda}{\wt(\bd_{u})}.
\end{equation*}
Combining these, we obtain 
\begin{equation*}
K_{c_{p}+1} = K_{c_{p}}+\pair{\Lambda}{\wt (\bd)} 
\ge \sum_{u=1}^{p-1} \Bpair{\Lambda}{\wt(\bd_{u})} + 
\pair{\Lambda}{\wt (\bd_{p})}=
\sum_{u=1}^{p} \Bpair{\Lambda}{\wt(\bd_{u})}.
\end{equation*}
Thus, we have proved the inequality 
$-\Deg (\eta)+L \le -\Deg(\eta)$, and hence 
the equality $\ti{\pi}_{\eta}=\pi_{\eta}$, as desired. 

Finally, from equation \eqref{eq:deg1} together with $L=0$ shown above, 
we deduce that 
\begin{align*}
-\Deg (\eta) & = 
\sum_{p=0}^{s-1} (\sigma_{p+1}-\sigma_{p}) 
\left(\sum_{u=1}^{p} \Bpair{\Lambda}{\wt(\bd_{u})}\right)
= 
\sum_{p=1}^{s-1}\sum_{u=1}^{p} (\sigma_{p+1}-\sigma_{p}) 
\Bpair{\Lambda}{\wt(\bd_{u})} \\[3mm]
& = 
\sum_{p=1}^{s-1} \left\{\sum_{q=p}^{s-1} (\sigma_{q+1}-\sigma_{q})\right\}
\Bpair{\Lambda}{\wt(\bd_{p})} 
= \sum_{p=1}^{s-1} (\sigma_{s}-\sigma_{p})
\Bpair{\Lambda}{\wt(\bd_{p})} \\[3mm]
& = \sum_{p=1}^{s-1} (1-\sigma_{p})
\Bpair{\Lambda}{\wt(\bd_{p})}. 
\end{align*}
Thus we have proved formula \eqref{eq:deg}. 
This completes the proof of Theorem~\ref{thm:main}. \qed

%======================%
%     BIBLIOGRAPHY     %
%======================%

{\small
\setlength{\baselineskip}{13pt}
\renewcommand{\refname}{References}

}

\end{document}